\theoremstyle{definition}
\newtheorem{definition}{Definition}
\theoremstyle{plain}
\newtheorem{theorem}{Theorem}
\newtheorem{lemma}{Lemma}
\newtheorem{proposition}{Proposition}
\newtheorem{claim}{Claim}
\newtheorem{question}{Question}
\newtheorem{observation}{Observation}
\newtheorem{example}{Example}
\newtheorem{notation}{Notation}
\newtheorem{corollary}{Corollary}
\newenvironment{myproof}[2] {\paragraph{Proof of {#1} {#2} :}}{\hfill$\square$}
\DeclareMathOperator{\weight}{wt}
\begin{document}
\title{Uniquely Pressable Graphs: Characterization, Enumeration, and Recognition}
\author{Joshua N. Cooper and Hays W. Whitlatch}
\maketitle
\begin{abstract}
We consider ``pressing sequences'', a certain kind of transformation of graphs with loops into empty graphs, motivated by an application in phylogenetics.  In particular, we address the question of when a graph has precisely one such pressing sequence, thus answering an question from Cooper and Davis (2015).  We characterize uniquely pressable graphs, count the number of them on a given number of vertices, and provide a polynomial time recognition algorithm.  We conclude with a few open questions.
\end{abstract}
\section{Introduction}
A signed permutation is an integer permutation where each entry is given a sign, plus or minus.  A reversal in a signed permutation is when a subword is reversed and the signs of its entries are flipped.  The primary computational problem of sorting signed permutations by reversals is to find the minimum number of reversals needed to transform a signed permutation into the positive identity permutation.  Hannenhalli and Pevzner famously showed that the unsigned sorting problem can be solved in polynomial time \cite{bergeron2001very, hannenhalli1999transforming} in contrast to the problem of sorting unsigned permutations, which is known to be NP-hard in general \cite{caprara1997sorting}.
At the core of the analysis given in \cite{hannenhalli1999transforming} is the study of ``successful pressing sequences'' on vertex 2-colored graphs. In \cite{cooper2016successful}, the authors discuss the existence of a number of nonisomorphic such graphs which have exactly one pressing sequence, the ``uniquely pressables".  In the context of computational phylogenetics, these graphs correspond to pairs of genomes that are linked by a unique minimum-distance evolutionary history. In this paper we use combinatorial matrix algebra over $\mathbb{F}_2$ to characterize and count the set of uniquely pressable bicolored graphs.
Previous work in the area has employed the language of black-and-white vertex-colored graphs in discussing successful pressing sequences.  For various reasons (such as simplifying definitions and notation), we find it more convenient to replace the black/white vertex-coloring with looped/loopless vertices. Thus, the object of study will be simple pseudo-graphs: graphs that admit loops but not multiple edges (sometimes known as ``loopy graphs''). However, for the purposes of illustration we borrow the convention that the loops of a simple pseudo-graph are drawn as black vertices \cite{bixby2015proving, cooper2016successful}.
Given a simple pseudo-graph $G$, denote by $V(G)$ the vertex set of $G$; $E(G)\subseteq V(G)\times V(G)$, symmetric as a relation, its edge set; and $G[S] = (S, (S\times S) \cap E(G))$ the induced subgraph of a set $S\subset V(G)$. Let $N(v)=N_G(v)=\{w\in V(G):vw\in E(G) \}$ the neighborhood of $v$ in $V(G)$.  Observe that $v\in N(v)$ iff $v$ is a looped vertex.
\begin{figure}[h]
\centering
\captionsetup{justification=centering}
\includegraphics[scale=.30]{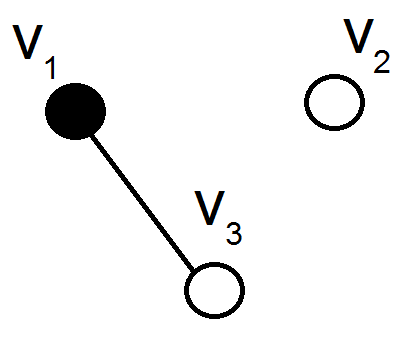}
\caption{A simple pseudo-graph $G=(V,E)$ with $V=\{v_1, v_2, v_3\}$ and  $E=\left\{ v_1 v_1, v_1 v_3\right\}$}
\end{figure}
After this introduction, the discussion is arranged into four sections. In Section 2, we develop some terminology and notation, and give a useful matrix factorization which we refer to as the ``instructional Cholesky factorization'' of a matrix (over $\mathbb{F}_2$), and discuss some of its properties.  In Section 3, we present our main result, Theorem \ref{BIG THEOREM}, which characterizes the uniquely pressable graphs as those whose instructional Cholesky factorizations have a certain set of properties.  In Section 4 we explore some consequences of the main theorem, such as the existence of a cubic-time algorithm for recognizing a uniquely pressable graph, a method for generating the uniquely pressable graphs by iteratively appending vertices to the beginning or end of a pressing sequence, and a counting argument which shows that there exist, up to isomorphism, exactly $(3-(-1)^n)/2 \cdot 3^{\lfloor n/2 \rfloor - 1}$ uniquely pressable graphs on $n$ non-isolated vertices.  In the final section we discuss some open questions in this area.
Before proceeding to Section 2 we list some basic notation for later use.  Other terminology/notation employed below can be found in  \cite{brualdi1991combinatorial} or \cite{diestel2000graph}.
\begin{itemize}
\item We often write $xy$ to represent the edge $\{x,y\}$ for concision.  In particular if $x=y$ then $xy$ is a loop.
\item $[n]:=\{1,2,\ldots,n\}$ and $[k,n]:=\{k, k+1, \ldots, n\}$ for all $k,n \in \mathbb{N}$.
\item When $S=V(G)\setminus\{x\}$ we write the induced subgraph of $G$ on $S$, $G[S]$, as $G-x$.  In general, $G - S$ denotes $G[V(G) \setminus S]$.
\item For integers $x$ and $y$, $x\equiv y \pmod 2$ is abbreviated  as $x\equiv y$. 
\item For a square matrix $M$ with rows and columns identically indexed by a set $X$, for all $x\in X$, $M_{\hat{x}}$ denotes the submatrix of $M$ with row and column $x$ removed.
\item When $\{x_\lambda\}_{\lambda \in \Lambda} \subset \mathbb{F}_2 \cup \mathbb{Z}$, the notation $\sum_{\lambda \in \Lambda}x_\lambda$ denotes addition over $\mathbb{Z}$ of $\overline{x}_\lambda$, where $\overline{x}_\lambda = x_\lambda$ if $x_\lambda \in \mathbb{Z}$ and $\overline{x}_\lambda$ is the least non-negative integer representation of $x_\lambda$ in $\mathbb{Z}$ if $x_\lambda \in \mathbb{F}_2$.    When referring to addition modulo $2$ we use symbols $\oplus$ and $\osum$. For example, if $x_1=x_2= 1 \in \mathbb{F}_2$ and $x_3=3\in \mathbb{Z}$ then 
$$
\sum\limits_{i=1}^{3}x_i=1+1+3=5\quad \textrm{and}\quad\osum\limits_{i=1}^{3}x_i=1\oplus 1 \oplus 1=1.
$$

\end{itemize}
\section{Pressing and Cholesky Roots}
\begin{definition}\label{pressedgraph}
Consider a simple pseudo-graph $G$ with a looped vertex $v \in V(G)$.  ``Pressing $v$'' is the operation of transforming $G$ into $G'$, a new simple pseudo-graph in which $G[N(v)]$ is complemented.  That is, 
$$
V(G')=V(G), \quad E(G')=E(G)\triangle\left(N(v)\times N(v)\right)
$$
We denote by $G_{(v)}$ the simple pseudo-graph resulting from pressing vertex $v$ in $V(G)$ and we abbreviate $G_{(v_1)(v_2)\cdots (v_k)}$ to $G_{(v_1, v_2, \ldots ,v_k)}$. For $k\geq 1$ we abbreviate $(1,2,\ldots,k)$ as $\boldsymbol{k}$ so that 
when $V(G)=[n]$ for some $n\geq k$ then we may simplify $G_{(1,2,\ldots,k)}$ to $G_{\boldsymbol{k}}$.  $G_{\boldsymbol{0}}$ and $G_{()}$ are interpreted to mean $G$.
\end{definition}
Given a simple pseudo-graph $G$, $(v_1, v_2, \ldots,v_j)$ is said to be a \emph{successful pressing sequence} for $G$ whenever the following conditions are met:
\begin{itemize}
\item $\{v_1, v_2, \ldots,v_k\}\subseteq V(G)$,
\item $v_{i}$ is looped in $G_{(v_1, v_2, \ldots,v_{i-1})}$ for all $1\leq i\leq k$, 
\item $G_{(v_1, v_2, \ldots,v_k)}=(V(G), \emptyset)$
\end{itemize}
In other words, looped vertices are pressed one at a time, with ``success'' meaning that the end result (when no looped vertices are left) is an empty graph.  From the definition of ``pressing'' $v$ we see that once a vertex is pressed it becomes isolated and cannot reappear in a valid pressing sequence.  It was shown in \cite{cooper2016successful} that  if $G_{(v_1, v_2, \ldots,v_k)}=(V(G), \emptyset)=G_{(v'_1, v'_2, \ldots,v'_{k'})}$  then $k=k'$, i.e., the length of all successful pressing sequences for $G$ are the same.  We refer to this length $k$ as the \emph{pressing length of $G$}.
\begin{definition}
An \emph{ordered simple pseudo-graph}, abbreviated OSP-graph, is a simple pseudo-graph with a total order on its vertices.  In this paper, we will assume that the vertices of an OSP-graph are subsets of the positive integers under the usual ordering ``$<$''.  An OSP-graph $G$ is said to be \emph{order-pressable} if there exists some initial segment of $V(G)$ that is a successful pressing sequence, that is, if it admits a successful pressing sequence $(v_1, v_2, \ldots,v_k)$ satisfying $v_1<v_2< \cdots < v_k$ and $v_k< v'$ for all $v'\in V(G)\setminus \{v_1, v_2, \ldots,v_k\}$.  
An OSP-graph $G$ is said to be \emph{uniquely pressable} if it is order-pressable and $G$ has no other successful pressing sequence. 
\end{definition}
\begin{lemma} {\label{unique implies full}}
If $G$ is a connected OSP-graph that is uniquely pressable then the pressing length of $G$ is $|V(G)|$.
\end{lemma}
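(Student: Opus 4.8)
The plan is to argue by contradiction. Suppose $G$ is connected and uniquely pressable but its pressing length $k$ is strictly less than $n=|V(G)|$. Let $P=\{v_1,\dots,v_k\}$ be the set of vertices occurring in the unique successful pressing sequence $(v_1,\dots,v_k)$, and let $U=V(G)\setminus P$ be the never-pressed vertices, so $U\neq\emptyset$. Since $G$ is connected, it will suffice to show that every vertex of $U$ is isolated in $G$; this forces $U=\emptyset$ (barring a trivial degenerate case), contradicting $U\neq\emptyset$. The mechanical fact I would lean on throughout is that pressing a looped vertex $v$ changes the adjacency relation only within $N(v)\times N(v)$, so any vertex $u\notin N(v)$ has the same neighborhood before and after $v$ is pressed.

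I would set $H_j=G_{(v_1,\dots,v_j)}$ for $0\le j\le k$, so that $H_0=G$, $H_k=(V(G),\emptyset)$, and $H_j=(H_{j-1})_{(v_j)}$, and prove by downward induction on $j$ that every $u\in U$ is isolated in $H_j$. The base case $j=k$ is immediate, since $H_k$ is the empty graph. For the inductive step I would assume each $u\in U$ is isolated in $H_j$ and examine how $u$ sat in $H_{j-1}$. Writing $S:=N_{H_{j-1}}(v_j)$ for the set complemented when $v_j$ is pressed, the easy case is $u\notin S$: pressing $v_j$ then leaves the neighborhood of $u$ untouched, so $N_{H_{j-1}}(u)=N_{H_j}(u)=\emptyset$ and $u$ is already isolated in $H_{j-1}$. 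All of the difficulty is concentrated in the remaining case $u\in S$.

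The crux is to eliminate the case $u\in S$ using uniqueness, and this is the step I expect to be the main obstacle. Because $v_j$ is looped we have $v_j\in S$, and $u\in S$ means the loop at $u$ and all pairs $uw$ with $w\in S$ get complemented when $v_j$ is pressed. Comparing this with the inductive hypothesis that $u$ is isolated in $H_j$ forces, in $H_{j-1}$, that $u$ is looped, is adjacent to every vertex of $S\setminus\{u\}$, and has no neighbor outside $S$; that is, $N_{H_{j-1}}(u)=S=N_{H_{j-1}}(v_j)$ with both $u$ and $v_j$ looped. But then pressing $u$ complements exactly the same set $S$ as pressing $v_j$, so $(H_{j-1})_{(u)}=(H_{j-1})_{(v_j)}=H_j$, and therefore $(v_1,\dots,v_{j-1},u,v_{j+1},\dots,v_k)$ is a successful pressing sequence for $G$ distinct from the original one (it presses $u\notin P$). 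This contradicts unique pressability, so the case $u\in S$ cannot occur, and the induction goes through.

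Taking $j=0$ yields that every vertex of $U$ is isolated in $G$. Since $G$ is connected and $U\neq\emptyset$, either there is a nonisolated vertex in $P$, in which case the isolated set $U$ separates $G$ and contradicts connectivity, or $V(G)=U$ and $G$ is edgeless; the latter contradicts connectivity when $|V(G)|\ge 2$, and the sole remaining possibility, a single isolated vertex, is set aside by the convention of working with nonisolated vertices. In every case we reach a contradiction, so $k=|V(G)|$. The heart of the argument is the neighborhood-coincidence computation of the third paragraph together with the verification that the swapped sequence is a genuinely distinct, valid pressing sequence; once that is in place, connectivity closes the argument quickly.
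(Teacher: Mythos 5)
Your proposal is correct and rests on the same key mechanism as the paper's own proof: a vertex that becomes isolated at the very step some other vertex is pressed must be a twin of that pressed vertex (equal closed neighborhoods), and swapping the two yields a second successful pressing sequence, contradicting uniqueness. The only difference is bookkeeping: you locate the twin pair by downward induction tracking the never-pressed vertices back from the empty graph (invoking connectivity explicitly at the end), whereas the paper scans forward to the first step at which the number of isolated vertices exceeds the number of pressed ones; both arguments, incidentally, share the same unaddressed degenerate case of a single loopless vertex.
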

\begin{proof}
Without loss of generality we may assume $V(G)=[n]$.  Assume by way of contradiction that the pressing length of $G$ is $m<n$.  Then the pressing sequence $\boldsymbol{m}=(1,2,\ldots,m)$  is realized by the sequence of graphs 
$$
G, G_{\boldsymbol{1}}, G_{\boldsymbol{2}}, \ldots,G_{\boldsymbol{m}}=([n], \emptyset).
$$
  Let $k=\min\limits_{i\in [m]}\{i\mid \textrm{$G_{\boldsymbol{i}}$ has more than $i$ isolated vertices}\}.$  Then pressing $k$ in $G_{\boldsymbol{k-1}}$ isolates $k$ and at least one more vertex, say $\ell$.  Therefore,
$$
N_{G_{\boldsymbol{k-1}}}(k)\neq \emptyset,\quad N_{G_{\boldsymbol{k-1}}}(\ell)\neq \emptyset, \quad \textrm{ and }\quad N_{G_{\boldsymbol{k}}}(k)=N_{G_{\boldsymbol{k}}}(\ell)= \emptyset.
$$
Then we have the following implications:
$$
v\in N_{G_{\boldsymbol{k-1}}}(\ell)
$$
$$
\Downarrow
$$
$$
v\ell\in E(G_{\boldsymbol{k-1}})\setminus E(G_{\boldsymbol{k}})
$$
$$
\Downarrow
$$
$$
v\ell \in E(G_{\boldsymbol{k-1}})\textrm{ and } v\ell \notin E(G_{\boldsymbol{k}})=E(G_{\boldsymbol{k-1}})\triangle \left(N_{G_{\boldsymbol{k-1}}}(k)\times N_{G_{\boldsymbol{k-1}}}(k) \right)
$$
$$
\Downarrow
$$
$$
v\ell\in E(G_{\boldsymbol{k-1}})\textrm{ and } v\ell \in N_{G_{\boldsymbol{k-1}}}(k)\times N_{G_{\boldsymbol{k-1}}}(k) 
$$
$$
\Downarrow
$$
$$
v \in N_{G_{\boldsymbol{k-1}}}(k)
$$
Hence $N_{G_{\boldsymbol{k-1}}}(\ell)\subseteq N_{G_{\boldsymbol{k-1}}}(k)$.  
However $N_{G_{\boldsymbol{k-1}}}(k) \subseteq N_{G_{\boldsymbol{k-1}}}(\ell)$, since otherwise there exists a $u\in N_{G_{\boldsymbol{k-1}}}(k)\setminus N_{G_{\boldsymbol{k-1}}}(\ell)$, so $\ell u \in E(G_{\boldsymbol{k}})$ because $k \in N_{G_{\boldsymbol{k-1}}}(\ell)$, contradicting the fact that $\ell$ is isolated in $G_{\boldsymbol{k}}$.  It follows that $k$ and $\ell$ are twins in $G_{\boldsymbol{k-1}}$ (i.e., there is an automorphism fixing all vertices except $k$ and $l$), so
$$
(1,2,\ldots,k-1, \ell, k+1,\ldots,\ell-1,k,\ell+1,\ldots,m)
$$
is a successful pressing sequence of $G$ in addition to $\boldsymbol{m}$, contradicting unique pressability.  We conclude that $m\not<n$, that is $m=n$.
\end{proof}
\noindent We say a component of $G$ is trivial if it is a loopless isolated vertex.   
\begin{proposition}{\label{succesful}\cite{cooper2016successful}} A simple pseudo-graph  $G$ admits a successful pressing sequence if and only if every non-trivial component of $G$ contains a looped vertex. \end{proposition}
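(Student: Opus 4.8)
The statement is a biconditional, so the plan is to treat the two implications separately. Write \emph{necessity} for the direction ``admits a successful pressing sequence $\Rightarrow$ every non-trivial component contains a loop'' and \emph{sufficiency} for its converse. Necessity is the routine direction and I would dispatch it by a ``frozen component'' argument; sufficiency is where essentially all the work lives, and I would reduce it to a single lemma about the \emph{existence of a safe vertex to press}, which I expect to be the main obstacle.

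\textbf{Necessity.} I argue the contrapositive: if some non-trivial component $C$ of $G$ has no looped vertex, then $G$ has no successful pressing sequence. The two facts to use are that (i) only looped vertices may be pressed, and (ii) pressing a vertex $u$ alters edges only inside $N(u)\times N(u)$, and $N(u)$ lies entirely in the component of $u$. From (ii), pressing a vertex outside $C$ changes no edge or loop of $C$, and distinct components can never merge (every newly created edge stays inside one component). Since $C$ starts with no loop, no vertex of $C$ can ever be pressed, and no press elsewhere can introduce a loop into $C$; hence $C$ is literally never modified and, being non-trivial, never becomes edgeless. So the empty graph is unreachable, as claimed.

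\textbf{Sufficiency.} Because presses in different components do not interact and trivial components are already empty, I first reduce to the case that $G$ is connected, non-trivial, and contains a loop. I then induct on the number of non-isolated vertices, which strictly decreases under any press (the pressed vertex becomes permanently isolated, and presses only delete or add edges among current neighbors). The inductive step reduces to the following \emph{safe-vertex lemma}: in a connected $G$ with a loop there is a looped vertex $v$ such that every non-trivial component of $G_{(v)}$ still contains a loop. Granting this, I press such a $v$, then apply the inductive hypothesis to each (strictly smaller) non-trivial component of $G_{(v)}$ and concatenate the resulting sequences.

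\textbf{The obstacle.} The heart of the proof is the safe-vertex lemma, and the difficulty is that the choice of $v$ genuinely matters: in the all-looped path $a$--$v$--$b$, pressing the middle vertex $v$ produces the loopless non-trivial component $\{a,b\}$ (a dead end), while pressing an endpoint succeeds; one also checks that even ``press a looped non-cut vertex'' can be unsafe. I would establish the lemma in two stages. The clean stage is: \emph{if some looped $v$ has all of $N(v)\setminus\{v\}$ loopless, it is safe.} Indeed each such neighbor becomes looped after pressing, so any non-trivial component $C$ of $G_{(v)}$ meeting $N(v)\setminus\{v\}$ has a loop; and a non-trivial $C$ avoiding $N(v)$ entirely is impossible, since a shortest $G$-path from $C$ to $v$ would exhibit an \emph{unchanged} (non-neighbor to neighbor) edge joining a vertex of $C$ to some $w\in N(v)$, forcing $w\in C$. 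The delicate stage is the remaining case, where every looped vertex has a looped neighbor; here I would either run an extremal/exchange argument (e.g.\ selecting a looped vertex inside a leaf block of the block--cut tree so that a surviving loop cannot be severed from the rest) or pass to the matrix picture, in which pressing $v$ sends the $\mathbb{F}_2$-adjacency matrix $A$ to $A + a_v a_v^{\top}$ and the goal becomes: a symmetric matrix over $\mathbb{F}_2$ with a nonzero diagonal on each nonzero block always admits a diagonal pivot whose Schur complement preserves that property. I expect this second stage to be the technical crux.
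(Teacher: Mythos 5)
Your necessity argument is correct and complete, and your reduction of sufficiency to a ``safe-vertex lemma'' is the right skeleton; your Stage~1 proof (the shortest-path argument that every non-trivial component of $G_{(v)}$ must meet $N(v)\setminus\{v\}$ via an edge untouched by the press) is also correct. One thing to note at the outset: the paper does not prove this proposition at all --- it is imported with a citation to \cite{cooper2016successful} --- so there is no internal proof to compare against, and your attempt has to stand on its own.

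On its own terms it has a genuine gap, located exactly where you flag it: Stage~2 is not a technical loose end but the entire content of the theorem, and nothing in the proposal discharges it. Your own example shows that Stage~1 can have empty scope: in the all-looped path $a$--$v$--$b$, \emph{every} looped vertex has a looped neighbor, so the only case you actually prove never applies, even to a three-vertex graph. Moreover, the two escape routes you sketch do not work as stated. The block--cut-tree heuristic fails because unsafe looped vertices can sit inside leaf blocks: take the triangle on $\{a,v,b\}$ with all three vertices looped, plus a loopless vertex $c$ adjacent to $a$ and $b$. This graph is $2$-connected, so its block--cut tree is a single (leaf) block containing $v$; yet pressing $v$ leaves the component $\{a,b,c\}$ with edges $ac$, $bc$ and no loops --- a dead end. (Other natural greedy rules also fail; e.g.\ ``press a looped vertex of maximum degree'' selects the unsafe middle vertex of the all-looped path.) And the matrix reformulation you offer --- that a symmetric $\mathbb{F}_2$ matrix with a nonzero diagonal entry in every nonzero block admits a diagonal pivot whose Schur complement preserves that property --- is precisely the safe-vertex lemma restated in pivot language, not an argument for it. So what is proved is the easy direction plus an easy special case of the hard direction; the crux, whose analogue in the sorting-by-reversals literature (existence of a ``safe'' move in Hannenhalli--Pevzner theory) is a substantial lemma, is missing.
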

\begin{corollary}
If $G$ is a uniquely pressable OSP-graph  with at least one edge  then $G$ contains exactly one non-trivial component $C$ and the pressing length of $G$ is $|V(C)|$. 
\end{corollary}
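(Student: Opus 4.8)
The plan is to deduce the corollary from Lemma~\ref{unique implies full} and Proposition~\ref{succesful}, handling the two assertions—uniqueness of the non-trivial component, and the value of the pressing length—in turn.

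First I would establish that $G$ has exactly one non-trivial component. Since $G$ has an edge (a loop or a proper edge), at least one component is non-trivial. For the other direction, suppose toward a contradiction that $C_1$ and $C_2$ are distinct non-trivial components. Because $G$ is order-pressable it admits a successful pressing sequence, so Proposition~\ref{succesful} guarantees that both $C_1$ and $C_2$ contain a looped vertex; since each of these components must be emptied, each is pressed at least once in the (unique) successful pressing sequence $(v_1,\ldots,v_k)$ of $G$. The key observation is that pressing a vertex $v$ only complements edges inside $N(v)$, which lies entirely within $v$'s own component, so presses of vertices in different components affect neither one another's loop status nor the order in which they may be carried out. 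I would make this precise as a commutation statement: if $v_i$ and $v_{i+1}$ lie in different components, then $v_{i+1}$ is still looped in $G_{(v_1,\ldots,v_{i-1})}$ and the two presses commute, so exchanging them yields another successful pressing sequence. As the pressed vertices include at least one from $C_1$ and one from $C_2$, the linear sequence $(v_1,\ldots,v_k)$ must contain some adjacent pair $v_i,v_{i+1}$ from different components; swapping that pair produces a second, distinct successful pressing sequence, contradicting unique pressability.

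With exactly one non-trivial component $C$ in hand, I would argue that every pressed vertex lies in $C$: a vertex in a trivial component is loopless and isolated, and presses in $C$ never reach it, so it is never eligible to be pressed. Hence the successful pressing sequences of $G$ are exactly those of $C$, extended by leaving the inert trivial components untouched. This shows at once that the pressing length of $G$ equals that of $C$, and that $C$ inherits unique pressability: it is order-pressable because the increasing run $(v_1,\ldots,v_k)$ of $C$-vertices is still an initial segment of $V(C)$, and its pressing sequence is unique because any pressing sequence of $C$ lifts to one of $G$. Since $C$ is connected, Lemma~\ref{unique implies full} then gives that the pressing length of $C$, and therefore of $G$, is $|V(C)|$.

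I expect the main obstacle to be the commutation/swap step: one must verify carefully that when $v_i$ and $v_{i+1}$ belong to different components the loop status of $v_{i+1}$ in $G_{(v_1,\ldots,v_{i-1})}$ is genuinely untouched by the press of $v_i$, and that the resulting graphs $G_{(\ldots,v_i,v_{i+1},\ldots)}$ and $G_{(\ldots,v_{i+1},v_i,\ldots)}$ agree, so that the swapped sequence is truly successful rather than merely plausible. The remaining points are bookkeeping: the existence of an adjacent cross-component pair is a pigeonhole argument on a linear order with two nonempty classes, and the transfer of order- and unique-pressability from $G$ to $C$ is immediate once the trivial components are recognized as inert.
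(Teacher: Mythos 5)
Your proof is correct and takes essentially the same route as the paper: both rest on the observation that a press only alters edges inside the pressed vertex's own component, and both use this locality to rearrange the unique pressing sequence of a hypothetical graph with two non-trivial components into a second, distinct successful sequence, contradicting unique pressability. The differences are only tactical---the paper reorders whole blocks (all of $C_2$, then all of $C_1$, then the rest) where you swap a single adjacent cross-component pair, and you make the pressing-length claim explicit via Lemma~\ref{unique implies full} applied to $C$, a step the paper leaves implicit.
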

\begin{proof}
Let $G=([n], E)$.  Let $C_1$ and $C_2$ be (possibly distinct) non-trivial connected components of $G$. Let $C_3=G-(V(C_1)\cup V(C_2))$ so that $G$ is the (possibly disjoint) union of $C_1, C_2$ and $C_3$.  As $G$ is uniquely pressable it has unique pressing sequence $\sigma=\boldsymbol{n}$.  Observe that pressing a vertex only makes changes to its closed neighborhood, a set which is contained within a single connected component.  Let $\sigma_i$ be the restriction of $\sigma$ to the vertices of $C_i$, $i=1,2,3$.   Then $\sigma_i$ is a successful pressing sequence for $G[C_i]$. 
If $C_1\neq C_2$ then $G$ is the disjoint union of $G[C_1]$, $G[C_2]$ and $G[C_3]$, where the last one may be empty. Then pressing the vertices of $G[C_2]$ followed by pressing the vertices of $G[C_1]$ followed by pressing the vertices of $G[C_3]$ gives a successful pressing sequence for $G$, contradicting the uniqueness of $\sigma$.    It follows that $C_1=C_2$ and $\sigma_3=\emptyset$, and therefore $G$ contains exactly one non-trivial connected component.  
\end{proof}
This shows that in order to understand uniquely pressable OSP-graphs, it suffices to understand connected, uniquely pressable OSP-graphs.
\begin{notation}
$\mathbf{CUP}_n$ is the  set of connected, uniquely pressable ordered ($<_{\mathbb{N}}$) simple pseudo-graphs on $n$ positive integer vertices.    
\end{notation} 
\begin{definition}
Given an OSP-graph $G=([n], E)$ define the \emph{adjacency matrix} $A=A(G)=(a_{i,j})\in \mathbb{F}_2^{n\times n}$  by 
$$
a_{i,j}=\begin{cases}1 & \textrm{ if  $ij\in E$,}\\
0 & \textrm{ otherwise.}
\end{cases}.
$$
Note that $A(G)$ is always symmetric.  Previous work in the area refers to such matrices as  \emph{augmented} adjacency matrices as the diagonal entries are nonzero where the vertices are colored black; since we have used looped vertices instead, the term ``augmented'' is not necessary.
Define the \emph{instructional Cholesky root} of $G$, denoted $U=U(G)=(u_{i,j})\in \mathbb{F}_2^{n\times n}$, by 
$$
u_{i,j}=\begin{cases}1 & \textrm{ if $i\leq j$ and $j \in N_{G_{\boldsymbol{i-1}}}(i)$,}\\
0 & \textrm{ otherwise.}
\end{cases}.
$$
Observe that the $j^{th}$ row of $U$ is given by the $j^{th}$ row of the adjacency matrix of $G_{\boldsymbol{j-1}}$, and that $u_{i,j}=1$ precisely when the act of pressing $i$ during a successful pressing sequence of $G$ flips the state of $j$.  Thus, $U$ provides detailed ``instructions'' on how to carry out the actual pressing sequence.  In Proposition \ref{U is Cholesky} we justify use of the name Cholesky.
\end{definition}
\begin{definition}
For an order-pressable graph $G=([n],E)$ with instructional Cholesky root $U=(u_{i,j})$ we define the \emph{dot product of two vertices} $i,j \in V(G)$ as the dot product over $\mathbb{F}_2$ of the $i^{th}$ and $j^{th}$ columns of $U$: 
$$
 \langle i,j \rangle_G=\osum\limits_{t=1}^{n}u_{t,i}u_{t,j}.
$$
\noindent We define the \emph{(Hamming) weight of a vertex} $j\in [n]$ by 
$$
\weight_G(j)=\sum\limits_{t=1}^{n}u_{t,j}
$$
 and observe that $\weight_G(j)\equiv{\langle j,j \rangle}_G$.
\end{definition}
\begin{figure}[h]
\centering
\captionsetup{justification=centering}
\includegraphics[scale=.30]{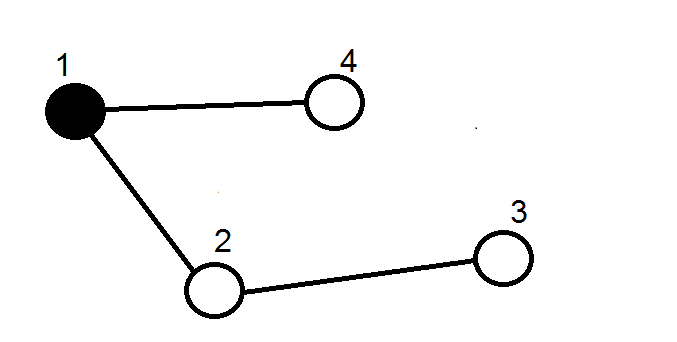}
\caption{\\$\weight(1)=1$, $\weight(2)=2$, $\weight(3)=2$, $\weight(4)=4$ \\ $\langle 1,2\rangle =1$, $\langle 1,3\rangle =0$, $\langle 1,4\rangle =1$, $\langle 2,3\rangle =1$, $\langle 2,4\rangle =0$, $\langle 3,4\rangle =0$}
\end{figure}
\begin{definition}
The \emph{weight of a column $C$} in a matrix $M$, written $\weight_M(C)$, is the sum of the entries  in column $C$ (again, as elements of $\mathbb{Z}$).
\end{definition}
Observe that if $U$ is the instructional Cholesky root of $G$ then the column weights of $U$ correspond to the vertex weights of $G$.
\begin{proposition}\label{U is Cholesky}
If $G=([n],E)$ is an OSP-graph with successful pressing sequence $1,2,\ldots, k$, adjacency matrix $A$, and instructional Cholesky root $U$ then 
$$
U^TU=A.
$$
\end{proposition}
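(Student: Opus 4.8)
The plan is to interpret each press as a symmetric rank-one update of the adjacency matrix over $\mathbb{F}_2$ and then telescope these updates. Write $A^{(i)} := A(G_{\boldsymbol{i}})$ for the adjacency matrix after the first $i$ presses, so that $A^{(0)}=A$ and, because $(1,2,\ldots,k)$ is successful, $A^{(k)}=0$. Let $u_i$ denote the $i$-th row of $U$, regarded as a row vector. The heart of the argument is the identity
$$
A^{(i)} = A^{(i-1)} \oplus u_i^{\,T} u_i \qquad (1 \le i \le k),
$$
where $u_i^{\,T} u_i$ is the rank-one outer product whose $(p,q)$ entry is $u_{i,p}u_{i,q}$.

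To establish this, I would first show that $u_i^{\,T}$ is exactly the $i$-th column of $A^{(i-1)}$, i.e.\ the indicator vector of $N_{G_{\boldsymbol{i-1}}}(i)$. For coordinates $j \ge i$ this is immediate from the definition of $u_{i,j}$. For coordinates $j < i$, vertex $j$ has already been pressed and is therefore isolated in $G_{\boldsymbol{i-1}}$, hence a non-neighbor of $i$; this matches $u_{i,j}=0$, which holds by the constraint $i\le j$ built into $U$. Thus the support of column $i$ of $A^{(i-1)}$ lies entirely in $\{i,i+1,\ldots,n\}$ and the column coincides with $u_i^{\,T}$. Now, by Definition~\ref{pressedgraph}, pressing $i$ replaces $E(G_{\boldsymbol{i-1}})$ by its symmetric difference with $N_{G_{\boldsymbol{i-1}}}(i)\times N_{G_{\boldsymbol{i-1}}}(i)$; at the level of adjacency matrices over $\mathbb{F}_2$, this symmetric difference is precisely addition of the outer product of that indicator vector with itself, which is the displayed identity. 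It is worth checking the diagonal separately: the $(i,i)$ entry of $u_i^{\,T}u_i$ equals $u_{i,i}=1$ since $i$ is looped in $G_{\boldsymbol{i-1}}$, so the update correctly deletes the loop at $i$ and, more generally, toggles the loop at each vertex of $N_{G_{\boldsymbol{i-1}}}(i)$, consistent with complementing $G[N(i)]$ including its loops.

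Summing the identity over $i=1,\ldots,k$ telescopes to
$$
0 = A^{(k)} = A \oplus \osum_{i=1}^{k} u_i^{\,T} u_i,
$$
so that $A = \osum_{i=1}^{k} u_i^{\,T} u_i$. Finally, for $i>k$ the graph $G_{\boldsymbol{i-1}}$ is already empty, so $N(i)=\emptyset$ and row $i$ of $U$ vanishes; hence $\osum_{i=1}^{k} u_i^{\,T} u_i = \osum_{i=1}^{n} u_i^{\,T} u_i = U^{T}U$, and the conclusion $U^{T}U=A$ follows. I expect the main obstacle to be the careful bookkeeping in the row/column identification of the second paragraph---specifically, verifying that the $i$-th column of $A^{(i-1)}$ is supported on $\{i,\ldots,n\}$, so that it equals the full $i$-th row of $U$ rather than merely agreeing with it on the upper-triangular part. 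Once this is pinned down, the rank-one update formula, the telescoping, and the final reassembly into $U^{T}U$ are all routine $\mathbb{F}_2$ computations.
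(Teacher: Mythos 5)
Your proof is correct, and while it rests on the same core observation as the paper's, it packages that observation differently. The paper works entrywise: for each fixed pair $(i,j)$ it introduces $S_{i,j}$, the set of times at which the state of $ij$ is flipped, and $T_{i,j}$, the set of times $t$ with both $i,j\in N_{G_{\boldsymbol{t-1}}}(t)$, observes $S_{i,j}=T_{i,j}$, and uses the emptiness of $G_{\boldsymbol{k}}$ to conclude $a_{i,j}\equiv |S_{i,j}|=|T_{i,j}|\equiv (U^TU)_{i,j}$. Your rank-one update identity $A^{(i)}=A^{(i-1)}\oplus u_i^Tu_i$ is exactly the statement $S_{i,j}=T_{i,j}$ asserted simultaneously for all pairs, and your telescoping against $A^{(k)}=0$ is the paper's parity argument done in aggregate. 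What your formulation buys: it tracks the full adjacency matrix at every intermediate stage (giving $A(G_{\boldsymbol{i}})=A\oplus\osum_{t\leq i}u_t^Tu_t$ for free, not just the final identity), it makes the decomposition of $A$ into outer products of the rows of $U$ — hence the Cholesky structure — completely transparent, and it forces an explicit verification, which you correctly supply, that row $i$ of $U$ is the \emph{full} indicator vector of $N_{G_{\boldsymbol{i-1}}}(i)$, i.e.\ that its entries below the diagonal vanish because pressed vertices remain isolated; the paper uses this fact silently when identifying $|T_{i,j}|$ with the dot product of columns of $U$. The paper's entrywise version is shorter precisely because it never needs the intermediate matrices. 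One cosmetic point: for $i>k+1$ the graph $G_{\boldsymbol{i-1}}$ is not literally defined (one cannot press an unlooped vertex), so the vanishing of rows $k+1,\ldots,n$ of $U$ is a convention adopted by the paper ("the remaining rows\ldots are all zero") rather than a consequence of $N(i)=\emptyset$; this does not affect your argument, since $\osum_{i=1}^{k}u_i^Tu_i=U^TU$ under that convention either way.
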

\begin{proof}
Let $U^TU=B=(b_{i,j})$ and $A=(a_{i,j})$.  Observe that $b_{i,j}$ is the result (modulo $2$) of dotting the $i^{th}$ and $j^{th}$ columns of $U$.  Hence 
$$
b_{i,j}= \langle i,j \rangle_U.
$$
\noindent For $i,j\in [n]$ let 
$$
S_{i,j}=\{t \in [k] \colon ij \in E(G_{\boldsymbol{t-1}}) \triangle  E(G_{\boldsymbol{t}}) \}
$$
 and 
$$
T_{i,j}=\{t \in [k] \colon ti \in E(G_{\boldsymbol{t-1}}) \textrm{ and } tj \in E(G_{\boldsymbol{t-1}}) \}.
$$
Observe that $S_{i,j}$ lists the times during the pressing sequence that the pressed vertex results in the state of edge $ij$ being flipped.  This occurs if and only if  both $i$ and $j$ are in the neighborhood of the vertex being pressed.  Hence $S_{i,j}=T_{i,j}$.
The state of the edge/non-edge $ij$ in $G_{\boldsymbol{k}}$ is determined by its original state in $G$ and by the number of times the state of the edge/non-edge $ij$ was flipped during the pressing sequence.  However, $G_{\boldsymbol{k}}=([n], \emptyset)$ so $ ij\notin E(G_{\boldsymbol{k}})$ and therefore the number of times that the state of the edge/non-edge $ij$ is flipped during the pressing sequence must agree in parity to with the original state of the edge/non-edge $ij$.  It follows that $|S_{i,j}|\equiv a_{i,j}$.
On the other hand $T_{i,j}=\{t \in [n]\colon u_{t,i}=u_{t,j}=1\}$ list the common $1$'s in columns $i$ and $j$ of the instructional Cholesky root.  Hence $|T_{i,j}|$ has the same parity as dotting the $i^{th}$ and the $j^{th}$ column of $U$.  It follows that 
$$
b_{i,j}= \langle i,j \rangle_G\equiv \vert T_{i,j}\vert=\vert S_{i,j}\vert\equiv a_{i,j}.
$$
Since the matrix entries are elements of $\mathbb{F}_2$,  we have  
$$
b_{i,j}=a_{i,j} 
$$
for $i,j\in [n]$ and therefore $U^TU=A$.
\end{proof}
\begin{observation} \textrm{ }\newline
Given an OSP-graph $G$ with adjacency matrix $A$ and instructional Cholesky root $U$,
$$
ij \in E(G) \textrm{ if and only if } \langle i,j \rangle_G=1,
$$
since $a_{i,j}$ is the result of dotting the $i^{th}$ and $j^{th}$ columns of $U$.  In particular 
$$
i \textrm{ is looped in $G$ if and only if } \weight_G(i)\equiv 1.
$$
\end{observation}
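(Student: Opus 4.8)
The plan is to read off both equivalences directly from the Cholesky identity $U^T U = A$ established in Proposition \ref{U is Cholesky}, since the Observation is in essence a dictionary translating that matrix identity back into the language of edges and loops. First I would observe that the $(i,j)$ entry of $U^T U$ is by definition the $\mathbb{F}_2$ dot product of the $i^{th}$ and $j^{th}$ columns of $U$, which is exactly the quantity $\langle i,j \rangle_G$ introduced in the definition of the vertex dot product. Combining this with Proposition \ref{U is Cholesky} gives $\langle i,j \rangle_G = (U^T U)_{i,j} = a_{i,j}$. Since the adjacency matrix was defined so that $a_{i,j}=1$ precisely when $ij \in E(G)$, the first equivalence $ij \in E(G) \iff \langle i,j \rangle_G = 1$ follows at once.

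For the loop characterization I would specialize to $i=j$. A vertex $i$ is looped exactly when $ii \in E(G)$, that is when $a_{i,i}=1$, and by the first part this is equivalent to $\langle i,i \rangle_G = 1$. Finally I would invoke the identity $\weight_G(i) \equiv \langle i,i \rangle_G$ recorded in the definition of the vertex weight, which permits replacing $\langle i,i \rangle_G$ by $\weight_G(i)$ in the parity statement and yields that $i$ is looped if and only if $\weight_G(i) \equiv 1$.

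I do not anticipate a genuine obstacle here: the statement is a direct consequence of Proposition \ref{U is Cholesky} together with the definitions of the adjacency matrix, the vertex dot product, and the vertex weight. The only point requiring a moment's care is matching the combinatorial object $\langle i,j \rangle_G$, defined as a dot product of two columns of $U$, with the algebraic entry $(U^T U)_{i,j}$; but this identification is immediate from the formula for matrix multiplication over $\mathbb{F}_2$, so the argument is purely a matter of unwinding definitions.
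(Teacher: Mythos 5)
Your proposal is correct and matches the paper's own justification: the Observation is exactly the entrywise reading of Proposition \ref{U is Cholesky} ($U^TU=A$), identifying $(U^TU)_{i,j}$ with $\langle i,j\rangle_G$ and then specializing to $i=j$ with $\weight_G(i)\equiv\langle i,i\rangle_G$ for the loop statement. Nothing further is needed.
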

In the theory of complex matrices, decompositions of the form $A = U^T U$ are known as ``Cholesky'' factorizations, so we repurpose this terminology here.  While a symmetric full-rank matrix over $\mathbb{F}_2$ has a unique Cholesky decomposition (see \cite{cooper2016successful}), a matrix $M\in \mathbb{F}_2^{n\times n}$ of less than full rank may have more than one Cholesky decomposition.  On the other hand, the  adjacency matrix $A$ of an OSP-graph $G$ with successful pressing sequence $1,2,\ldots,k$ has a unique instructional Cholesky root $U$ as the first $k$ rows are determined by the sequence of graphs $G, G_{\boldsymbol{1}}, G_{\boldsymbol{2}}, \ldots, G_{\boldsymbol{k-1}}$ and the remaining rows (should they exist) are all zero.  Throughout the paper we will take advantage of this by referring interchangeably to a  pressable OSP-graph $G$, its (ordered) adjacency matrix $A$, and its instructional Cholesky root $U$.
\begin{example}
Consider $M\in \mathbb{F}_2^{5\times 5}$ given by 
$$
M=\begin{bmatrix}
1 & 0 & 0 & 0 & 1\\
0 & 1 & 0 & 1 & 0\\ 
0 & 0 & 0 & 0 & 0\\ 
0 & 1 & 0 & 1 & 0\\ 
1 & 0 & 0 & 0 & 1\\ 
\end{bmatrix}.
$$
The (unique) instructional Cholesky root of $M$ is 
$$
U=\begin{bmatrix}
1 & 0 & 0 & 0 & 1\\
0 & 1 & 0 & 1 & 0\\ 
0 & 0 & 0 & 0 & 0\\
0 & 0 & 0 & 0 & 0\\
0 & 0 & 0 & 0 & 0\\
\end{bmatrix}.
$$
  The following matrices also offer Cholesky factorizations for $M$:
$$
\begin{bmatrix}
1 & 0 & 0 & 0 & 1\\
0 & 1 & 0 & 1 & 0\\ 
0 & 0 & 0 & 0 & 0\\
0 & 0 & 0 & 0 & 1\\
0 & 0 & 0 & 0 & 1\\
\end{bmatrix},\begin{bmatrix}
1 & 0 & 0 & 0 & 1\\
0 & 1 & 0 & 1 & 0\\ 
0 & 0 & 0 & 0 & 1\\
0 & 0 & 0 & 0 & 1\\
0 & 0 & 0 & 0 & 0\\
\end{bmatrix},\begin{bmatrix}
1 & 0 & 0 & 0 & 1\\
0 & 1 & 0 & 1 & 0\\ 
0 & 0 & 0 & 0 & 1\\
0 & 0 & 0 & 0 & 0\\
0 & 0 & 0 & 0 & 1\\
\end{bmatrix},
$$
$$
\begin{bmatrix}
1 & 0 & 0 & 0 & 1\\
0 & 1 & 0 & 1 & 0\\ 
0 & 0 & 0 & 1 & 0\\
0 & 0 & 0 & 1 & 0\\
0 & 0 & 0 & 0 & 0\\
\end{bmatrix} \textrm{ and } \begin{bmatrix}
1 & 0 & 0 & 0 & 1\\
0 & 1 & 0 & 1 & 0\\ 
0 & 0 & 0 & 1 & 1\\
0 & 0 & 0 & 1 & 1\\
0 & 0 & 0 & 0 & 0\\
\end{bmatrix}.
$$
\end{example}
\begin{definition}
Consider a pressable OSP-graph $G=(V,E)$  where $V=\{v_i\}_{i \in [n]}$ has the order implied by its indexing.  For each $j \in [n]$ we say that $v_j$ has \emph{full weight in $G$} provided 
$$
\weight_G(v_j)= j.
$$
  In particular if  $V(G)=[n]$ under the usual ordering then vertex $j$ has full weight if and only if $\weight_G(j)=j$, if and only if 
$$
j\in N_{G_{\boldsymbol{i-1}}}(i)\quad \textrm{ for all } i \in [j]. 
$$
\end{definition}
\noindent The following notation will be used to simplify inductive arguments. 
\begin{notation}
For a given OSP-graph $G=([n], E)$ with looped vertex $j$ denote by $G^{(j)}=G_{(j)}-j$  the result of pressing vertex $j$ and then deleting it from the vertex set.  Furthermore we let $G^{\boldsymbol{j}}$ denote the result of pressing and deleting vertices $1,2,\ldots, j$ in order from $G$.  
\end{notation}
\begin{lemma}\label{pressed graph has nice cholesky}
If $G\in \mathbf{CUP}_n$ has instructional Cholesky root $U$ then $G^{\boldsymbol{1}} \in \mathbf{CUP}_{n-1}$ and the instructional Cholesky root of $G^{\boldsymbol{1}} $ is $U_{\hat{1}}$.
\end{lemma}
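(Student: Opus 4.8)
The plan is to transfer every property of $H := G^{\boldsymbol 1} = G_{(1)}-1$ back to $G$, exploiting that vertex $1$ becomes isolated the moment it is pressed and may therefore be deleted without disturbing any later press. I may assume $V(G)=[n]$. Since $G\in\mathbf{CUP}_n$ is connected and uniquely pressable, Lemma~\ref{unique implies full} gives pressing length $n$, so the unique successful pressing sequence of $G$ is $\boldsymbol n=(1,2,\ldots,n)$; in particular $1$ is looped in $G$ and $H$ is an OSP-graph on the $n-1$ positive integers $\{2,\ldots,n\}$.

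First I would record the governing correspondence: for entries $w_1,\ldots,w_m\in\{2,\ldots,n\}$, the tuple $(w_1,\ldots,w_m)$ is a successful pressing sequence for $H$ if and only if $(1,w_1,\ldots,w_m)$ is one for $G$. This holds because $1$ is isolated in $G_{(1)}$, so $H$ is exactly the induced subgraph of $G_{(1)}$ on $\{2,\ldots,n\}$, and because pressing a vertex alters only its closed neighborhood, the deletion of the isolated vertex $1$ commutes with all subsequent presses. Granting this, any successful pressing sequence of $H$, prepended with $1$, is a successful pressing sequence of $G$, hence equals $(1,2,\ldots,n)$ by unique pressability of $G$; so the $H$-sequence must be $(2,3,\ldots,n)$. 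Thus $H$ is order-pressable, is uniquely pressable, and has pressing length $n-1=|V(H)|$.

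The step I expect to require the most care is connectivity, since unique pressability alone does not forbid disconnected graphs (an isolated looped vertex, for instance). I would close this gap as follows. Because the unique sequence $(2,\ldots,n)$ presses every vertex of $H$ and a loopless isolated vertex can never be pressed, $H$ has no trivial component. On the other hand $H$ is uniquely pressable with at least one edge (vertex $2$, pressed first, is looped), so by the Corollary above $H$ has exactly one non-trivial component. Exactly one non-trivial component together with no trivial component forces $H$ to be connected, whence $H\in\mathbf{CUP}_{n-1}$.

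Finally I would identify the instructional Cholesky root $U'$ of $H$ with $U_{\hat 1}$ by comparing entries. The commuting property yields $H_{(2,\ldots,i-1)}=G_{\boldsymbol{i-1}}-1$ for each $i\in\{2,\ldots,n\}$, and since $1$ is isolated in $G_{\boldsymbol{i-1}}$ it is not a neighbor of $i$, so $N_{H_{(2,\ldots,i-1)}}(i)=N_{G_{\boldsymbol{i-1}}}(i)$. Hence for $i\le j$ in $\{2,\ldots,n\}$ we get $u'_{i,j}=1$ iff $j\in N_{G_{\boldsymbol{i-1}}}(i)$ iff $u_{i,j}=1$, which is exactly $U'=U_{\hat 1}$. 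The only delicacy here is the index bookkeeping matching the initial segment $(2,\ldots,i-1)$ of the $H$-sequence with the segment $\boldsymbol{i-1}$ of the $G$-sequence.
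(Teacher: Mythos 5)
Your proof is correct and follows essentially the same route as the paper's: prepend $1$ to any successful pressing sequence of $G^{\boldsymbol{1}}$, invoke uniqueness of $G$'s sequence to force it to be $(2,3,\ldots,n)$, and identify the instructional Cholesky root row-by-row via $N_{H_{(2,\ldots,i-1)}}(i)=N_{G_{\boldsymbol{i-1}}}(i)$. Your explicit connectivity check (no trivial components because every vertex of $G^{\boldsymbol{1}}$ gets pressed, combined with the corollary that a uniquely pressable graph with an edge has exactly one non-trivial component) is a nice touch that makes rigorous a step the paper's proof leaves implicit when it concludes $G^{\boldsymbol{1}}\in\mathbf{CUP}_{n-1}$.
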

\begin{proof}
Let  $G=([n], E) \in \mathbf{CUP}_n$ with instructional Cholesky root $U$. The unique successful pressing sequence of $G$ is $\boldsymbol{n}$ which is realized by 
$$
G, G_{\boldsymbol{1}}, G_{\boldsymbol{2}}, \ldots, G_{\boldsymbol{n}}
$$
 and hence $G_{\boldsymbol{1}}$ admits a successful pressing sequence: $2,3,\ldots,n$.  Furthermore, if $(v_1,v_2, \ldots, v_{n-1})$ is a successful pressing sequence of $G_{\boldsymbol{1}}$, then $(1,v_1,v_2, \ldots, v_{n-1})$ is a successful pressing sequence of $G$.  By uniqueness it follows that $v_i=i+1$ for each $i\in [n-1]$.  Then $G_{\boldsymbol{1}}$ admits exactly one successful pressing sequence $2,3,\ldots,n$, and therefore so does $G^{\boldsymbol{1}}$.  It follows that $G^{\boldsymbol{1}} \in \mathbf{CUP}_{n-1}$.
Let $V$ be the instructional Cholesky root of $G^{\boldsymbol{1}}$.  The first row of $G^{\boldsymbol{1}}$ is given by the neighborhood of $2$ in $G_{\boldsymbol{1}}$ and in general the $j^{th}$ row of $V$ is given by $N_{G^{\boldsymbol{1}}(2,3,\ldots,j)}(j+1)$. However
$$
N_{G^{\boldsymbol{1}}(2,3,\ldots,j)}(j+1)=N_{G_{\boldsymbol{j}}}(j+1)
$$
  for each $j\in [n-1]$.  Therefore the $j^{th}$ row of $V$ is the $(j+1)^{th}$ row of $U$ with the first entry deleted, since $1$ is not a vertex in $G^{\boldsymbol{1}}$.  $V$ is the principal submatrix of $U$ restricted to rows and columns $2,3,\ldots,n$.
\end{proof}
\begin{proposition}{\label{cooptastic} \cite{cooper2016successful}}
An OSP-graph $G=([n], E)$ has pressing sequence $\boldsymbol{n}$ if and only if every leading principal minor of its adjacency matrix is nonzero.
\end{proposition}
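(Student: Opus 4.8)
The plan is to prove the biconditional by induction on $n$, reducing the pressability of $G$ to that of $G^{\boldsymbol{1}}$ and tracking what a single press does to the leading principal minors. The engine of the reduction is that pressing a looped vertex is a rank-one symmetric update of the adjacency matrix: by Definition \ref{pressedgraph}, pressing $v$ replaces $A$ by $A \oplus \mathbf{a}_v\mathbf{a}_v^T$, where $\mathbf{a}_v$ is the indicator column of $N(v)$, i.e. the $v^{\text{th}}$ column of $A$. Applied to $v=1$ this produces exactly the Schur complement of the $(1,1)$ entry.

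First I would record the quick half. If $G$ admits the pressing sequence $\boldsymbol{n}$, then its instructional Cholesky root $U$ is upper triangular with every diagonal entry equal to $1$, since $u_{i,i}=1$ records precisely that $i$ is looped in $G_{\boldsymbol{i-1}}$, which is required for each press to be legal. Writing $A_k$ and $U_k$ for the leading $k\times k$ submatrices of $A$ and $U$, the upper-triangularity of $U$ forces $A_k = U_k^T U_k$ from the identity $A=U^TU$ of Proposition \ref{U is Cholesky} (for $i,j\le k$ only rows $t\le k$ contribute to the column dot product), whence $\det A_k = (\det U_k)^2 = 1 \ne 0$ over $\mathbb{F}_2$. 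This settles the forward implication; the induction will recover it along with the converse in a unified way.

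For the inductive core, partition $A = \begin{pmatrix} a_{1,1} & \mathbf{b}^T \\ \mathbf{b} & C \end{pmatrix}$, where $\mathbf{b}$ is the indicator of $N_G(1)\setminus\{1\}$ and $C = A(G-1)$. If $a_{1,1}=1$ (so $1$ is looped and may be pressed), the rank-one update zeroes out the first row and column and replaces $C$ by $C\oplus \mathbf{b}\mathbf{b}^T$; deleting the now-isolated vertex $1$ gives $A(G^{\boldsymbol{1}}) = C\oplus\mathbf{b}\mathbf{b}^T$, the Schur complement of $a_{1,1}$ in $A$. The standard Schur-complement determinant identity, valid over $\mathbb{F}_2$ because the pivot $a_{1,1}=1$ is invertible, then yields $\det A_k = a_{1,1}\det\big(A(G^{\boldsymbol{1}})_{k-1}\big)$ for every $k$, with $\det A_1 = a_{1,1}$. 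Hence, when $a_{1,1}=1$, the minors of $A$ of orders $2,\ldots,n$ are all nonzero exactly when the minors of $A(G^{\boldsymbol{1}})$ of orders $1,\ldots,n-1$ are all nonzero.

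Combining these, I would close the induction. The base case $n=1$ is immediate: $G$ presses iff its vertex is looped iff $\det A_1 = a_{1,1}\ne 0$. For the step, $G$ has pressing sequence $\boldsymbol{n}$ iff vertex $1$ is looped ($a_{1,1}=1$) and $G^{\boldsymbol{1}}$ admits the pressing sequence $(2,3,\ldots,n)$ (pressing and then discarding the isolated vertex $1$ does not affect subsequent presses). By the induction hypothesis the latter is equivalent to all leading principal minors of $A(G^{\boldsymbol{1}})$ being nonzero, and by the minor recursion above this is equivalent to all leading principal minors of $A$ being nonzero. The main obstacle, and the only place needing care, is the algebraic identification of a single press with the Schur complement together with the determinant recursion over $\mathbb{F}_2$; once that is pinned down, the index bookkeeping and the harmless deletion of the isolated vertex at each stage are routine.
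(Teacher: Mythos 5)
This proposition is one the paper does not prove at all: it is imported verbatim from \cite{cooper2016successful}, so there is no internal proof to compare yours against, and your proposal supplies a genuine, self-contained argument. It is correct. The engine identity is exactly right: pressing a looped vertex $v$ replaces $A$ by $A\oplus\mathbf{a}_v\mathbf{a}_v^T$, where $\mathbf{a}_v$ is the $v^{\text{th}}$ column of $A$, since complementing $G[N(v)]$ toggles precisely the entries indexed by $N(v)\times N(v)$; with $v=1$ and $a_{1,1}=1$ this zeroes the first row and column and turns $A(G^{\boldsymbol{1}})$ into the Schur complement $C\oplus\mathbf{b}\mathbf{b}^T$, and because the pivot $a_{1,1}=1$ is invertible in $\mathbb{F}_2$ the recursion $\det A_k=a_{1,1}\det\bigl(A(G^{\boldsymbol{1}})_{k-1}\bigr)$ holds for every $k\geq 2$. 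The reduction of ``$\boldsymbol{n}$ is successful for $G$'' to ``$a_{1,1}=1$ and $(2,\ldots,n)$ is successful for $G^{\boldsymbol{1}}$'' is also sound, since a pressed vertex is isolated and stays isolated, so deleting it affects nothing, and once every vertex has been legally pressed the graph is automatically empty. Your argument is, in essence, the classical one that the cited reference formalizes: pressing is one step of symmetric Gaussian elimination, so a full in-order pressing exists iff elimination never meets a zero pivot, which is the leading-principal-minor criterion. One minor stylistic point: your ``quick half'' via $\det A_k=(\det U_k)^2=1$ (using Proposition \ref{U is Cholesky}) is redundant, as you note, since the induction establishes both directions at once; either keep it and run only the converse through the induction, or drop it entirely.
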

\begin{lemma}\label{principal submatrix lemma}
Let $G=([n], E)\in \mathbf{CUP}_n$ with instructional Cholesky root $U$ and let $H=G-n$ be the induced subgraph of $G$ on $[n-1]$.  Then $H \in \mathbf{CUP}_{n-1}$ and the instructional Cholesky root of $H$ is $U_{\hat{n}}$.
\end{lemma}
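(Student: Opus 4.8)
The plan is to prove the two assertions—that $U_{\hat n}$ is the instructional Cholesky root of $H$ and that $H\in\mathbf{CUP}_{n-1}$—by combining the block structure of $U$ with a lifting of pressing sequences from $H$ to $G$. First I would exploit that $U$ is upper triangular. Writing $U=\begin{bmatrix} U_{\hat n} & c \\ \mathbf{0}^{T} & u_{n,n}\end{bmatrix}$, a direct block multiplication shows that the leading $(n-1)\times(n-1)$ block of $U^TU=A$ equals $U_{\hat n}^{T}U_{\hat n}$. Since that leading block is exactly $A_{\hat n}$, the adjacency matrix of $H=G-n$, we obtain $A(H)=U_{\hat n}^{T}U_{\hat n}$, so $U_{\hat n}$ is an upper-triangular Cholesky root of $A(H)$.

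Next I would establish order-pressability of $H$. By Lemma~\ref{unique implies full}, $G$ has pressing sequence $\boldsymbol{n}$, so by Proposition~\ref{cooptastic} every leading principal minor of $A$ is nonzero. The leading principal minors of $A_{\hat n}$ of orders $1,\dots,n-1$ coincide with those of $A$, hence are nonzero, and Proposition~\ref{cooptastic} then gives that $H$ has pressing sequence $\boldsymbol{n-1}$. In particular $A(H)$ is nonsingular, so its Cholesky root is unique; as $U_{\hat n}$ is such a root, it must be the instructional Cholesky root of $H$. This also fixes the pressing length of $H$ at $n-1$, so every successful pressing sequence of $H$ presses all of $V(H)$.

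For unique pressability the key step is a lifting claim: if $(w_1,\dots,w_{n-1})$ is any successful pressing sequence of $H$, then $(w_1,\dots,w_{n-1},n)$ is a successful pressing sequence of $G$. I would prove by induction on $j$ that the restriction of $G_{(w_1,\dots,w_j)}$ to $[n-1]$ equals $H_{(w_1,\dots,w_j)}$; the inductive step holds because $N_{G_{(w_1,\dots,w_j)}}(w_{j+1})\cap[n-1]=N_{H_{(w_1,\dots,w_j)}}(w_{j+1})$, so pressing $w_{j+1}$ in $G$ induces on $[n-1]$ exactly the complementation that pressing it in $H$ does, and the loop at $w_{j+1}$ needed to legally press is present in $G$ because it is present in $H$. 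After all $n-1$ presses, each vertex of $[n-1]$ is isolated in $G_{(w_1,\dots,w_{n-1})}$ (a pressed vertex becomes isolated), so the only possible remaining edge is the loop at $n$. The empty case is excluded by Lemma~\ref{unique implies full}, since it would make $(w_1,\dots,w_{n-1})$ a successful sequence of length $n-1<n$ for $G$; hence $n$ is looped, and pressing it empties the graph. As $G$ is uniquely pressable, $(w_1,\dots,w_{n-1},n)=\boldsymbol{n}$, forcing $(w_1,\dots,w_{n-1})=\boldsymbol{n-1}$, so $\boldsymbol{n-1}$ is the unique successful pressing sequence of $H$.

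Finally, $H$ is uniquely pressable and has an edge (vertex $1$ is looped, being pressed first), so by the corollary following Proposition~\ref{succesful} it has exactly one non-trivial component, whose order equals its pressing length $n-1=|V(H)|$; thus $H$ is connected and $H\in\mathbf{CUP}_{n-1}$. I expect the lifting claim to be the main obstacle—specifically, verifying that the presses $w_1,\dots,w_{n-1}$ in $G$ faithfully simulate the corresponding presses in $H$ on $[n-1]$ while leaving vertex $n$ with at most a loop.
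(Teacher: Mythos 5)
Your proposal is correct, and it reaches the lemma by a route that, interestingly, swaps the paper's tools between the two halves of the argument. For the first half (that $\boldsymbol{n-1}$ presses $H$ and that $U_{\hat{n}}$ is its instructional Cholesky root), the paper runs a combinatorial induction showing $H_{\boldsymbol{i}}=G_{\boldsymbol{i}}-n$ for all $i\in[n-1]$ and reads the root's rows off the neighborhoods; you instead use block multiplication to get $A(H)=U_{\hat{n}}^TU_{\hat{n}}$, note that the leading principal minors of $A_{\hat{n}}$ are among those of $A$, invoke Proposition \ref{cooptastic}, and then appeal to uniqueness of Cholesky roots of full-rank symmetric matrices (a fact the paper states and uses elsewhere, e.g.\ in Corollary \ref{extending right}). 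For the second half (uniqueness of $H$'s pressing sequence), the paper goes linear-algebraic: it encodes an arbitrary successful sequence $\sigma$ of $H$ by a permutation matrix $P$, identifies the leading block of $PAP^T$ with $V^TV$ for the Cholesky root $V$ of $H$ under $\sigma$, and uses $\det(PAP^T)=\det(A)\neq 0$ together with Proposition \ref{cooptastic} to conclude that $(\sigma,n)$ presses $G$; you instead lift $\sigma$ purely combinatorially, showing by induction that presses in $G$ restrict to presses in $H$ on $[n-1]$, that after pressing all of $[n-1]$ only the loop at $n$ can survive (pressed vertices stay isolated), and that it must survive since otherwise $G$ would have a successful sequence of length $n-1$, contradicting Lemma \ref{unique implies full}. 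Both routes finish identically by uniqueness of $G$'s sequence. Your lifting is more elementary---no determinants, no reordered Cholesky root---at the cost of the ``isolated vertices stay isolated'' and loop-at-$n$ observations, which you do supply; the paper's minor-based lifting handles vertex $n$ automatically. Two small points: you should dispose of the trivial case $n=1$ (the paper does, since then $H$ is the empty graph and there is no looped vertex $1$ to speak of), and on the plus side you explicitly verify connectivity of $H$ via the corollary to Proposition \ref{succesful}, a step the paper's own proof leaves implicit.
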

\begin{proof}
If $n=1$ then $H=(\emptyset, \emptyset)$ which has only the empty sequence as a successful pressing sequence and its instructional Cholesky root is the empty matrix. Let $n>1$.  Observe that $N_H(1)=N_G(1)-\{n\}$ so $1$ is a looped vertex in $H$ and therefore may be pressed to obtain $H_{\boldsymbol{1}}$.
For all $j\in [n-1]$:
\begin{eqnarray}
N_{H_{\boldsymbol{1}}}(j)&=&\begin{cases} N_{H}(j)\triangle N_{H}(1),&1j\in E(H)\\  N_{H}(j),& 1j\notin E(H)\\ \end{cases}\nonumber \\
&=&\begin{cases} \left(N_{G}(j)\triangle N_{G}(1)\right)-\{n\},& 1j\in E(H)\leftrightarrow 1j\in E(G)\\
N_{G}(j)-\{n\}, & 1j\notin E(H)\leftrightarrow 1j\notin E(G)\\ \end{cases}\nonumber \\
&=&\begin{cases} N_{G_{\boldsymbol{1}}}(j)-\{n\},&  1j\in E(G)\\
N_{G_{\boldsymbol{1}}}(j)-\{n\}, & 1j\notin E(G)\\
\end{cases}\nonumber 
\end{eqnarray}
Assume $H_{\boldsymbol{i}}=G_{\boldsymbol{i}}-n$ for some $1\leq i<n-1$.  Then $i+1$ is looped in $G_{\boldsymbol{i}}$, implying that it is looped in  $H_{\boldsymbol{i}}$, so for all $j\in [n-1]$:
\begin{eqnarray}
N_{H_{\boldsymbol{i+1}}}(j)&=&\begin{cases} N_{H_{\boldsymbol{i}}}(j)\triangle N_{H_{\boldsymbol{i}}}(i+1),& \{j,i+1\} \in E(H_{\boldsymbol{i}})\\
N_{H_{\boldsymbol{i}}}(j),& \{j,i+1\} \notin E(H_{\boldsymbol{i}})\\ \end{cases}\nonumber \\
&=&\begin{cases} \left(N_{G_{\boldsymbol{i}}}(j)\triangle N_{G_{\boldsymbol{i}}}(i+1)\right)-\{n\},&  \{j,i+1\} \in E(G_{\boldsymbol{i}})\\
N_{G_{\boldsymbol{i}}}(j)-\{n\}, &  \{j,i+1\} \notin E(G_{\boldsymbol{i}})\\
\end{cases}\nonumber \\
&=&\begin{cases} N_{G_{\boldsymbol{i+1}}}(j)-\{n\},&  \{j,i+1\} \in E(G_{\boldsymbol{i}})\\
N_{G_{\boldsymbol{i+1}}}(j)-\{n\}, & \{j,i+1\} \notin E(G_{\boldsymbol{i}})\\
\end{cases}\nonumber 
\end{eqnarray}
By induction it follows that $\boldsymbol{n-1}$ is a valid pressing sequence for $H$ and $H_{\boldsymbol{i}}=G_{\boldsymbol{i}}-\{n\}$ for all $i\in [n-1]$.  We proceed to show that $\boldsymbol{n-1}$ is the only successful pressing sequence for $H$.
Let $A$ be the adjacency matrix of $G$ (under the ordering $\boldsymbol{n}$) and let $U$ be its instructional Cholesky root. Let $\sigma=(v_1, v_2, \ldots, v_{n-1})$ be a valid pressing sequence for $H$ and let $\tau=(v_1, v_2, \ldots, v_{n-1}, n)$. 
Let $P$ be the permutation matrix that encodes $\tau$.  Then $A_{\hat{n}}$ is the adjacency matrix of $H$ under the usual ordering $<$ and $P_{\hat{n}}A_{\hat{n}}{P_{\hat{n}}}^T$ is the adjacency matrix of $H$ under the ordering given by $\sigma$.  Let $V$ be the instructional Cholesky root of $H$ under $\sigma$.
Observe that by Proposition \ref{cooptastic}, $\det(A)\neq 0$ and so
$$
\det(PAP^T)=\det(P)\det(A)\det(P^T)=\det(A)\neq 0.
$$
Furthermore 
$$
PAP^T=\left[\begin{array}{c|c}
\begin{matrix} &&\\&   \mbox{{$P_{\hat{n}}$}}&\\
    &&\\ \end{matrix}  & \begin{matrix} 0 \\ \vdots  \\ 0\\
\end{matrix}\\ \hline 
\begin{matrix} 0 &\cdots & 0 \\
\end{matrix}  &  1\\
\end{array}\right]
\left[\begin{array}{c|c}
\begin{matrix} &&\\&\mbox{{$A_{\hat{n}}$}}&\\
    &&\\ \end{matrix}  & \begin{matrix} * \\ \vdots  \\ *\\
\end{matrix}\\ \hline 
\begin{matrix} * &\cdots & * \\
\end{matrix}  &  1\\
\end{array}\right]
\left[\begin{array}{c|c}
\begin{matrix} &&\\&   \mbox{{$P_{\hat{n}}$}}&\\
    &&\\ \end{matrix}  & \begin{matrix} 0 \\ \vdots  \\ 0\\
\end{matrix}\\ \hline 
\begin{matrix} 0 &\cdots & 0 \\
\end{matrix}  &  1\\
\end{array}\right]^T
$$
$$
=
\left[\begin{array}{c|c}
\begin{matrix} &&\\&   \mbox{{$P_{\hat{n}}A_{\hat{n}}{P_{\hat{n}}}^T$}}&\\
    &&\\ \end{matrix}  & \begin{matrix} * \\ \vdots  \\ *\\
\end{matrix}\\ \hline 
\begin{matrix} * &\cdots & * \\
\end{matrix}  &  *\\
\end{array}\right]
=
\left[\begin{array}{c|c}
\begin{matrix} &&\\&   \mbox{{$V^T V$}}&\\
    &&\\ \end{matrix}  & \begin{matrix} * \\ \vdots  \\ *\\
\end{matrix}\\ \hline 
\begin{matrix} * &\cdots & * \\
\end{matrix}  &  *\\
\end{array}\right]
$$
Recall that $H$ has $\boldsymbol{n-1}$ as a successful pressing sequence and so every successful pressing sequence must have length $n-1$.  It follows that all the diagonal entries of (upper/lower-triangular matrices) $V$ and $V^T$ must be $1$, implying that every leading principal minor of $PAP^T$ is non-zero.  By Proposition \ref{cooptastic}, $\tau$ is a successful pressing sequence for $G$.  By uniqueness $\tau=\boldsymbol{n}$ and hence $\sigma=\boldsymbol{n-1}$. We may conclude that $H\in \mathbf{CUP}_{n-1}$.
\end{proof}
\begin{corollary}\label{principal submatrices}
Let $G\in \mathbf{CUP}_n$ with instructional Cholesky root $U$.  Then any principal submatrix of $U$ on $k$ consecutive rows and columns is the instructional Cholesky root of a $\mathbf{CUP}_k$ graph.
\end{corollary}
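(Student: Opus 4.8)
The plan is to build an arbitrary consecutive principal submatrix by peeling rows and columns off the two ends one at a time, invoking exactly the two lemmas just established. A principal submatrix of $U$ on $k$ consecutive indices occupies rows and columns $a, a+1, \ldots, a+k-1$ for some $a$ with $1 \le a \le n-k+1$. I would produce it by first deleting the bottom rows and columns $a+k, \ldots, n$ and then deleting the top rows and columns $1, \ldots, a-1$, tracking at each stage that the current matrix is the instructional Cholesky root of a connected uniquely pressable graph.

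First I would remove the last $n-(a+k-1)$ indices. Lemma \ref{principal submatrix lemma} says that deleting the largest vertex of a $\mathbf{CUP}$ graph yields another $\mathbf{CUP}$ graph whose instructional Cholesky root is $U_{\hat{n}}$, i.e.\ the leading submatrix of $U$ with its last row and column removed. Applying this repeatedly to the current graph produces a member of $\mathbf{CUP}_{a+k-1}$ whose instructional Cholesky root is the top-left $(a+k-1)\times(a+k-1)$ block of $U$.

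Next I would remove the first $a-1$ indices. Lemma \ref{pressed graph has nice cholesky} says that pressing and deleting the smallest vertex of a $\mathbf{CUP}$ graph again lands in $\mathbf{CUP}$, with instructional Cholesky root $U_{\hat{1}}$, the submatrix obtained by deleting the first row and column. Applying this $a-1$ times to the graph from the previous step strips off rows and columns $1, \ldots, a-1$, leaving a $\mathbf{CUP}_k$ graph whose instructional Cholesky root is precisely the submatrix of $U$ on rows and columns $a, \ldots, a+k-1$. This is the desired submatrix, so the claim follows by two successive finite inductions, one at each end.

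The only real obstacle is bookkeeping: both lemmas are stated for graphs whose vertex set is an initial segment $[m]$ under the usual order, whereas the intermediate graphs in the iteration sit on vertex sets such as $\{2, \ldots, m\}$. I would handle this by noting that $\mathbf{CUP}$ membership and the instructional Cholesky root depend only on the order type of the vertex set, hence are invariant under the unique order-preserving relabeling onto $[m]$; the matrix entries, and therefore the claimed submatrices, are unchanged, so each lemma may legitimately be reapplied to the relabeled intermediate graph.
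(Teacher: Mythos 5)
Your proof is correct and is essentially the paper's own argument: the paper's entire proof reads ``Follows by iteratively applying Lemmas \ref{pressed graph has nice cholesky} and \ref{principal submatrix lemma},'' which is precisely your peel-from-both-ends strategy. Your extra care about relabeling intermediate vertex sets onto initial segments $[m]$ is a reasonable bookkeeping detail that the paper leaves implicit.
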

\begin{proof}
Follows by iteratively applying Lemmas \ref{pressed graph has nice cholesky} and \ref{principal submatrix lemma}.
\end{proof}
\begin{corollary}\label{superdiagonals are ones}
If $U$ is the instructional Cholesky root of $G\in\mathbf{CUP}_n$ then $U$ must have all $1$'s on the main diagonal and super-diagonal.  
\end{corollary}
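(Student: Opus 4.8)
The plan is to treat the two diagonals separately, handling the main diagonal directly from pressability and reducing the super-diagonal claim to the two-vertex case via Corollary \ref{principal submatrices}.

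For the main diagonal, I would argue that $u_{i,i}=1$ for every $i\in[n]$ as follows. Because $G\in\mathbf{CUP}_n$, its (unique) successful pressing sequence is $\boldsymbol{n}$, so at step $i$ the vertex $i$ must be looped in $G_{\boldsymbol{i-1}}$; otherwise $i$ could not be pressed. By the definition of the instructional Cholesky root, $u_{i,i}=1$ precisely when $i\in N_{G_{\boldsymbol{i-1}}}(i)$, i.e.\ precisely when $i$ is looped in $G_{\boldsymbol{i-1}}$, so every diagonal entry equals $1$. (Equivalently, since $U$ is upper triangular one has $A_{[k]}=U_{[k]}^{T}U_{[k]}$ for the leading $k\times k$ blocks, whence $\det A_{[k]}=\bigl(\prod_{i=1}^{k}u_{i,i}\bigr)^{2}$; Proposition \ref{cooptastic} forces each such minor to be nonzero, again giving $u_{i,i}=1$.)

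For the super-diagonal, fix $i\in[n-1]$ and let $U'$ be the principal submatrix of $U$ on the consecutive rows and columns $i,i+1$. By Corollary \ref{principal submatrices}, $U'$ is the instructional Cholesky root of some graph $G'\in\mathbf{CUP}_2$; in particular $G'$ is connected. Since $U$ is upper triangular with unit diagonal, $U'=\bigl[\begin{smallmatrix}1 & u_{i,i+1}\\ 0 & 1\end{smallmatrix}\bigr]$, and a short computation over $\mathbb{F}_2$ gives the adjacency matrix $A'=U'^{T}U'=\bigl[\begin{smallmatrix}1 & u_{i,i+1}\\ u_{i,i+1} & u_{i,i+1}+1\end{smallmatrix}\bigr]$. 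The off-diagonal entry of $A'$ is exactly $u_{i,i+1}$, and connectivity of a two-vertex graph forces its two vertices to be adjacent, i.e.\ $A'_{1,2}=1$. Hence $u_{i,i+1}=1$.

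The only step carrying real content is the localization via Corollary \ref{principal submatrices}, which converts a global claim about $U$ into a statement about a connected two-vertex graph, where connectivity alone rules out a vanishing off-diagonal entry (equivalently, $u_{i,i+1}=0$ would make $A'=I_2$, two isolated looped vertices, which is disconnected). I do not expect a serious obstacle here; the only thing to watch is the degenerate bookkeeping for small $n$—there is no super-diagonal when $n=1$, and the two-vertex reduction requires $n\ge 2$—which is routine.
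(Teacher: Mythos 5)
Your proof is correct and takes essentially the same route as the paper: both arguments reduce to consecutive $2\times 2$ principal submatrices via Corollary \ref{principal submatrices} and then use the structure of a connected $\mathbf{CUP}_2$ graph (connectivity forcing the off-diagonal entry of the $2\times 2$ adjacency matrix to be $1$). The only difference is cosmetic—you establish the unit diagonal directly from the definition of the pressing sequence (or via Proposition \ref{cooptastic}), whereas the paper obtains it from its one-line classification of $\mathbf{CUP}_2$.
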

\begin{proof}
Let $H=([2], E) \in \mathbf{CUP}_2$.  Since it is connected, $\{1,2\} \in E$; since it is order-pressable, $1$ must be looped; and since it is uniquely pressable, $N_H(1)\neq N_H(2)$.  Therefore $\mathbf{CUP}_2=\{([2], \{\{1,1\},\{1,2\}\})\}$ which corresponds to instructional Cholesky root $\begin{bmatrix} 1& 1\\ 0&1\\\end{bmatrix}$.  The result holds by application of Corollary \ref{principal submatrices}.
\end{proof}
\section{Characterizing Unique Pressability}
\begin{definition}
For an upper-triangular matrix $M\in \mathbb{F}_2^{n\times n}$ with columns $C_1$, $C_2$, $\ldots$, $C_n$ with respective column weights $w_1, w_2, \ldots, w_n$, we say:
\begin{itemize}
\item $C_j=(c_{1,j}, c_{2,j}, \ldots, c_{n,j})^T$ has {\em Property 1}  if $\begin{cases} c_{i,j}= 1, &  j-w_j < i\leq j\\
c_{i,j}= 0, &  \textrm{ otherwise}\\
\end{cases}$.
\item $M$ has {\em Property 1} if each of its columns have Property 1.
\item $M$ has {\em Property 2} if $1=w_1\leq w_2 \leq \cdots \leq w_n$
\item $M$ has {\em Property 3} if $w_i>2$ implies $w_{i+2}>w_i$, for $i\in [n-2]$.
\item $M$ has {\em Property 4} if, whenever some non-initial column has odd weight, then it must have full weight and so must each column to its right.
\end{itemize}
In other words, Property 1 is the condition that the nonzero entries in each column are consecutive and end at the diagonal; Property 2 is the condition that the weights of the columns are nondecreasing; and Property 3 is the condition that any column must have weight greater than that of the column two indices to its left if the latter has weight more than $2$. Note also that Property 4 implies that any even-indexed column must have even weight; otherwise, it would have full weight, i.e., weight equal to the column index, which is even, a contradiction.  Let $\mathcal{M}_n=\{M\in \mathbb{F}_2^{n\times n}\mid M \textrm{ satisfies Properties 1, 2, 3 and 4} \}$.     Observe that if $M \in \mathcal{M}_n$ then $M_{\hat{n}}\in \mathcal{M}_{n-1}$.
\end{definition}
\begin{lemma}\label{properties are inherited}
Let $n>1$ and $M\in \mathbb{F}_2^{n\times n}$ with columns and rows  indexed by $1,2,\ldots,n$.
If $M\in \mathcal{M}_n$ then $M_{\hat{1}}\in \mathcal{M}_{n-1}$.
\end{lemma}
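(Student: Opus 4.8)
The plan is to relabel the rows and columns of $M_{\hat 1}$ by $1,2,\dots,n-1$, so that old column $j$ (with $2\le j\le n$) becomes new column $j-1$ with weight $w'_{j-1}$, and then to verify Properties 1--4 one at a time. The single computation underpinning the whole argument is that, by Property 1, the top entry of column $j$ (the entry in row $1$) equals $1$ exactly when the support $\{j-w_j+1,\dots,j\}$ of that column reaches row $1$, i.e.\ exactly when $w_j=j$; so deleting row $1$ removes a $1$ from column $j$ precisely for the full-weight columns. Hence $w'_{j-1}=w_j$ when $w_j<j$ and $w'_{j-1}=j-1$ when $w_j=j$, and in either case new column $j-1$ is full (has weight equal to its index) exactly when $w_j\in\{j-1,j\}$.

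Properties 1, 2 and 3 are then comparatively routine. For Property 1, deleting row $1$ and shifting indices sends the consecutive support $\{j-w_j+1,\dots,j\}$ of old column $j$ to a consecutive block of rows ending at the diagonal of new column $j-1$, in both cases $w_j=j$ and $w_j<j$. For Property 2, old Property 2 gives $w_2\in\{1,2\}$, so the new first column has weight $1$ either way; and since passing to $M_{\hat 1}$ decreases the weight of each full column by exactly $1$ and leaves the others fixed, comparing consecutive new columns in the four cases (each/neither of columns $j,j+1$ full) shows the weights stay nondecreasing. For Property 3 I would case on whether old columns $i+1$ and $i+3$ are full, use old Property 3 to get $w_{i+3}>w_{i+1}$, and then rule out equality of the shifted weights $w'_{i}$ and $w'_{i+2}$.

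The main obstacle is Property 4. Suppose a new column $k\ge 2$ has odd weight. Writing $j:=k+1$, the weight formula above forces either $w_j$ odd with $w_j<j$, or $w_j=j$ with $j$ even. The first alternative is impossible, because $j\ge 3$ makes column $j$ non-initial, so old Property 4 would force it to have full weight; hence $w_j=j$ with $j$ even, $j\ge 4$, and the triggering new column $k=j-1$ is itself full. It then remains to show that every old column $m\ge j$ yields a full new column, i.e.\ $w_m\ge m-1$ for all $m\ge j$. I would prove the sharper claim that, for all $m\ge j$, $w_m\ge m-1$ and moreover $w_m=m$ whenever $m$ is even, by induction on $m$: the even step invokes old Property 3 (legitimately, since the inductive hypothesis gives $w_{m-2}=m-2\ge 4>2$) to get $w_m\ge m-1$, and then the remark that Property 4 forces even-indexed columns to have even weight pins $w_m=m$; the odd step follows from monotonicity against the even, hence full, column $m-1$. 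This yields Property 4 for $M_{\hat 1}$ and finishes the proof. I expect this propagation-of-full-weight step to be the crux, as it is the one place where Properties 2, 3 and 4 must be combined.
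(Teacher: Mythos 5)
Your proposal is correct and follows essentially the same route as the paper's proof: both rest on the key observation (from Property 1) that deleting row $1$ decrements the weight of exactly the full-weight columns, verify Properties 1--3 by direct weight comparisons, and establish Property 4 by the same rightward propagation of full weight using Property 3, the even-index parity remark, and monotonicity. The remaining differences are cosmetic --- your relabeling of indices, and your case analysis on fullness for Properties 2 and 3, where the paper instead argues by contradiction and invokes Property 4 to rule out a weight gap of exactly one.
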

\begin{proof}
Let $M=(c_{i,j})_{i,j\in [n]}$ so that   $M_{\hat{1}}=(c_{i,j})_{2\leq i,j\leq n}$. Let $w_1, w_2, .., w_n$ be the column weights of $M$. Let the columns and rows of $M_{\hat{1}}$ be $C_2, \ldots, C_n$ with  weights $w'_2, \ldots, w'_n$, respectively.  Observe that $w'_j=w_j-c_{1,j}$ for each $2\leq j\leq n$.  It is immediate that $M_{\hat{1}}$ inherits Property 1 from $M$. 
By Property 4  we know that the second column of $M$ (as well as any even-indexed column of $M$) has even weight, it follows that $w_2=2$ and so $w'_2=2-c_{1,2}=1$.  Suppose towards a contradiction $w'_i>w'_{i+1}$ for some $2\leq i\leq n-1$. Then 
$$
w'_{i}+1>w'_{i+1}+1\geq w'_{i+1}+c_{1,i+1} = w_{i+1}\geq w_i\geq w'_i
$$
 and so $w'_i=w_i$.  Then 
$$
w'_{i+1}<w'_i=w_i\leq w_{i+1}= w'_{i+1}+c_{1,i+1}\leq w'_{i+1}+1
$$
 and so $w_{i+1}=w'_{i+1}+1$.  Hence we have 
$$
w_{i+1}=w'_{i+1}+1<w'_{i}+1=w_i+1\quad \Rightarrow \quad w_{i+1}\leq w_i.
$$
It follows that $w_{i+1}= w_i$ and therefore column $i+1$ does not have full weight.  By Property 1 of $M$ we have $c_{1,i+1}=0$ and therefore  $w'_{i+1}=w_{i+1}$, a contradiction.  It follows that $M_{\hat{1}}$ has Property 2.

Suppose now that $2<w'_j$ for some $2\leq j\leq n-2$. Then $w_j\geq w'_j>2$ so  $w_{j+2}>w_j$ by Property 3 of $M$. 
If $w_j= w_{j+2}-1$ then either column $j$ or column $j+2$ has odd weight which implies $w_{j+2}=j+2$ by Property 4 of $M$.  But then $w_j=j+1$ which is not possible.  Therefore,   
$$
w_j<w_{j+2}-1  
$$
and 
$$
w'_j\leq w_j< w_{j+2}-1\leq w'_{j+2}
$$
which shows that $M_{\hat{1}}$ has Property 3.
To show Property 4 suppose $w'_k\equiv 1$ for some $k>2$ ($2$ is the initial column of $M_{\hat{1}}$).  Observe that
$$
w'_k+1\geq w_k\geq w'_k.
$$
If $w_k=w'_k$ then $w_k\equiv 1$ and not full weight, contradicting Property 4.  Hence 
$$
w_k=w'_k+1
$$
 and $c_{1,k}=1$.  It follows from Property 1 that $w_k=k>2$. Hence 
$$
w'_k>1
$$
 and since $w'_k\equiv 1$ then 
$$
w'_k\geq 3\quad \textrm{ and }\quad w_k\geq 4. 
$$
Applying Property 3 of $M$ 
$$
k+2\geq w_{k+2}\geq w_k+1=k+1.
$$
Since $1\equiv w'_k=k-1$ we have that $w_{k+2}$ is the weight of an even-indexed column, and so $w_{k+2}\equiv 0$ and 
$$
w_{k+2}=k+2.
$$
By arguing inductively, for all $j\in \left[\lfloor (n-k)/2 \rfloor \right]$: 
$$
0\equiv k+2j\geq w_{k+2j}\geq w_{k+2(j-1)}+1=k+2j-1\equiv 1
$$
hence
$$
w_{k+2j}=k+2j.
$$
It follows that for all $j\in \left[\lfloor (n-k)/2 \rfloor \right]$:
$$
w'_{k+2j}=(k+2j)-c_{1,k+2j}=k+2j-1.
$$
Furthermore, for all $j\in \left[0,\lceil (n-k-1)/2 \rceil \right]$: 
$$
w_{k+2j+1}\geq w_{k+2j}=k+2j 
$$
and so 
$$
w_{k+2j+1}=k+2j+c_{1, k+2j+1}.
$$
Therefore, for all $j\in \left[0,\lceil (n-k-1)/2 \rceil \right]$:
$$
w'_{k+2j+1}= w_{k+2j+1}-c_{1, k+2j+1}=k+2j.
$$
It follows that, in $M_{\hat{1}}$, all the columns of index at least $k$ have full weight and therefore $M_{\hat{1}}$ has Property 4.
\end{proof}
Observe that the previous lemma can be extended to any matrix $M \in \mathcal{M}_n$ by relabeling the rows and columns.  We now proceed to our main theorem, which characterizes the set $\mathbf{CUP}_n$.  This in turn provides a characterization of all the uniquely pressable simple pseudo-graphs (up to isomorphism), since the unique non-trivial, connected component of a simple pseudo-graph can always be relabeled to be a $\mathbf{CUP}$ graph.
\begin{notation}
For an OSP-graph $G$ let 
$$
\mathcal{L}(G)=\{v\mid v \in V(G) \textrm{ is a looped vertex}\}.
$$
\end{notation}
\begin{theorem}\label{BIG THEOREM}
Let $G=([n],E)$ with instructional Cholesky root $U$. Then  $G  \in \mathbf{CUP}_n$  if and only if $U\in \mathcal{M}_n$.
\end{theorem}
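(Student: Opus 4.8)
The plan is to argue by strong induction on $n$, exploiting that both families are closed under deleting the last vertex and under pressing-and-deleting the first vertex: on the graph side these reductions are supplied by Lemmas~\ref{principal submatrix lemma} and \ref{pressed graph has nice cholesky} (packaged in Corollary~\ref{principal submatrices}), and on the matrix side by Lemma~\ref{properties are inherited} together with the closure $M \mapsto M_{\hat n}$ noted in the definition of $\mathcal{M}_n$. The base cases $n \le 2$ are the computation in Corollary~\ref{superdiagonals are ones}. A useful running observation is that Property~1 together with Property~2 forces $U$ to be unit upper-triangular, so $U$ is determined by its column-weight sequence $(w_1,\dots,w_n)$, and for $i<j$ the value $\langle i,j\rangle_G$ is just the parity of the number of rows in which the two solid runs of $U$ overlap.

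For the direction $U \in \mathcal{M}_n \Rightarrow G \in \mathbf{CUP}_n$, I would first note that since $U$ is unit upper-triangular, every leading principal minor of $A = U^TU$ is the square of the corresponding minor of $U$, hence $1\ne 0$; by Proposition~\ref{cooptastic}, $G$ is order-pressable with sequence $\boldsymbol n$ and $U$ is its instructional Cholesky root. Connectivity is then immediate from the overlap formula: the run in column $j$ begins at row $i^\ast = j - w_j + 1$, and (using $w_j \ge 2$ for $j \ge 2$, which follows from Properties~2 and 4) one gets $\langle i^\ast, j\rangle_G = 1$, so every vertex $j \ge 2$ has a smaller-indexed neighbor and $G$ is connected. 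For uniqueness, Lemma~\ref{properties are inherited} gives $U_{\hat 1} \in \mathcal{M}_{n-1}$, so by induction $G^{\boldsymbol 1} \in \mathbf{CUP}_{n-1}$; hence any successful sequence starting at vertex $1$ must be $\boldsymbol n$. It remains to force vertex $1$ to be pressed first. The looped vertices are exactly $1$ and the odd full-weight vertices (Property~4); if $m>1$ is looped then $m$ is odd and Property~4 makes every column $\ge m$ full, and a direct neighborhood computation shows that pressing $m$ removes all loops among vertices $<m$, leaves the edge $\{1,2\}$ intact (as $2\notin N(m)$), and produces no edge between $[m-1]$ and $[m+1,n]$. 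Thus $G^{(m)}$ has a nontrivial loopless component, so by Proposition~\ref{succesful} it admits no successful pressing sequence, ruling out $v_1=m$. Hence $v_1=1$ and $G$ is uniquely pressable.

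For the direction $G \in \mathbf{CUP}_n \Rightarrow U \in \mathcal{M}_n$, I would apply the induction hypothesis to the $\mathbf{CUP}_{n-1}$ graphs $G-n$ and $G^{\boldsymbol 1}$, whose roots are $U_{\hat n}$ and $U_{\hat 1}$; this delivers Properties~1--4 for all interior columns and reduces the problem to the interaction of the final column with the rest. Concretely, $U_{\hat 1} \in \mathcal{M}_{n-1}$ makes $u_{2,n},\dots,u_{n,n}$ a solid run ending at the diagonal, so the only possible failures are a gap created when $u_{1,n}=1$ sits above a run not beginning at row $2$ (Property~1), or an incorrect relation of $w_n$ to $w_{n-1}$, $w_{n-2}$ and the full-weight propagation (Properties~2, 3, 4). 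For each such deviation I would exhibit a second successful pressing sequence of $G$, contradicting unique pressability; the template is the twin-swap of Lemma~\ref{unique implies full}, and in the simplest violating cases the witness is an explicit reordering for which one verifies, via Proposition~\ref{cooptastic}, that the permuted adjacency matrix still has all nonzero leading principal minors.

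I expect this last step—the \emph{necessity} of the Properties—to be the main obstacle. The difficulty is that unique pressability is a global condition on all successful sequences, whereas Properties~1--4 are local statements about column weights, so a clean contradiction requires producing a concrete alternate pressing sequence (equivalently, a nonidentity reordering preserving every leading principal minor) for each way the corner column can leave the normal form. Organizing these cases so that the full-weight propagation of Property~4 meshes correctly with Properties~1--3 is where the bookkeeping is most delicate.
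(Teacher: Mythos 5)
Your sufficiency direction ($U \in \mathcal{M}_n \Rightarrow G \in \mathbf{CUP}_n$) is sound and is essentially the paper's argument: induction via Lemma~\ref{properties are inherited} to dispose of sequences starting at vertex $1$, and, for a looped vertex $m > 1$ (necessarily of odd, hence full, weight by Property~4), the computation $\langle i,r\rangle_G \equiv \min\{\weight_G(i),\weight_G(r)\}$ for full-weight $r$ to show that pressing $m$ strands the surviving edge $\{1,2\}$ in a loopless component, so that $G^{(m)}$ is unpressable by Proposition~\ref{succesful}. (The paper performs the separation at $k = \min\{i \geq 2 : \weight_G(i) \equiv 1\}$ rather than at $m$ itself, but your version of the separation checks out.)

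The necessity direction, however, is a genuine gap, and you concede as much: beyond the correct reduction to $U_{\hat 1}, U_{\hat n} \in \mathcal{M}_{n-1}$, you never actually rule out any deviant configuration of the last column. This is not routine bookkeeping; it is the bulk of the paper's proof. The paper's mechanism is also not the one you propose. Rather than exhibiting a twin-swap or an explicit reordering whose leading principal minors are checked via Proposition~\ref{cooptastic}, the paper runs your own sufficiency mechanism in reverse: since $G \in \mathbf{CUP}_n$, pressing any looped vertex $k \neq 1$ first must fail, so by Proposition~\ref{succesful} the graph $G^{(k)}$ must contain a nontrivial loopless component, and the contradiction in each deviant case comes from constructing an explicit path in $G^{(k)}$ from an arbitrary vertex of such a component to a looped vertex, showing no such component can exist. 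Concretely, the paper splits on $(u_{1,n},u_{2,n})$: the case $(1,1)$ is immediate; the case $(0,0)$ requires first proving the separate claims $u_{1,n-1}=0$ and $u_{1,n-2}=0$ before the properties of $U_{\hat 1}$ and $U_{\hat n}$ can be transferred to $U$; the case $(0,1)$ requires showing $\mathcal{L}(G)=\{1\}$, which takes three claims with the parity of $n$ entering; and the case $(1,0)$ --- exactly your ``gap created when $u_{1,n}=1$ sits above a run not beginning at row $2$'' --- must be shown to be impossible outright, which consumes another page of path-building. None of this is present in your proposal, and the twin-swap template of Lemma~\ref{unique implies full} will not generate it: the violating configurations do not in general produce twin vertices (the paper's own small counterexamples, with alternate sequences such as $(3,4,1,2)$ and $(4,3,2,1)$, are not transpositions of twins), and verifying that a guessed permutation preserves every leading principal minor is precisely as hard as the connectivity analysis it would replace.
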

\begin{proof}
For $n=1$ the conditions of $\mathcal{M}_1$ are only met by $G=([1], \{(1,1)\})$ which in turn is the only full-length  uniquely pressable OSP-graph on vertex set $[1]$.  Let $n>1$ and assume towards an inductive argument that the statement holds for $n-1$. 
\noindent We begin by showing sufficiency,  that is if  $U\in \mathcal{M}_n$ then $G  \in \mathbf{CUP}_n$.
Choose and fix $U=(u_{i,j}) \in \mathcal{M}_n$.  Let  $G=([n], E)$ be the OSP-graph with instructional Cholesky root $U$.  By Properties $1$ and $2$, $u_{i,i}=1$ for each $i\in [n]$.  This implies that vertex $i$ is looped in $G_{\boldsymbol{i-1}}$ for each $i \in [n]$.  It follows that $\boldsymbol{n}$ is a successful pressing sequence for $G$. We will show it is the only successful pressing sequence for $G$.  Fix a successful pressing sequence $\sigma=\sigma_1, \ldots, \sigma_n$ for $G$. 
If $\sigma_1=1$ then $G^{(\sigma_1)}$ has adjacency matrix 
$$
A(G^{\boldsymbol{1}})=U^T_{\hat{1}}U_{\hat{1}}.
$$
By Lemma \ref{properties are inherited}, $U_{\hat{1}}\in \mathcal{M}_{n-1}$ and therefore $G^{(\sigma_1)}\in \mathbf{CUP}_{n-1}$ by the inductive hypothesis.  Hence $G^{(\sigma_1)}$ has exactly one pressing sequence, and, since $G^{(\sigma_1)}=G^{\boldsymbol{1}}$, the sequence is $(2,3,\ldots,n)$.  We may conclude that if $\sigma_1=1$ then $\sigma =\boldsymbol{n}$.
Assume, by way of contradiction, that $\sigma_1=t> 1$.  We will show that $G^{(t)}$  contains a non-trivial loopless component and therefore is not pressable.  
Since $t$ is a looped vertex it must have odd weight, and therefore full weight by Property 4.   Let 
$$
k=\min\limits_{2\leq i\leq n}\{i\mid  \weight_G(i)\equiv 1\}.
$$
\noindent Let 
$$
L=[2,k-1],\quad  R=[k,n],\quad  \overline{L}=L\cup \{1\},\quad  \textrm{and} \quad \overline{R}=R\cup \{1\}.
$$
By Property 4 of $U$, all the vertices in $\overline{R}$ have full weight.  For all $i\in [n]$ and $r\in \overline{R}$:
$$
\langle i,r \rangle_G=\osum_{k=1}^{n}u_{k,i}u_{k,r}=\osum_{k=1}^{r}(u_{k,i}\cdot 1)\oplus \osum_{k=r+1}^{n}(u_{k,i}\cdot 0)=\min\left\{\weight_G(i), \weight_G(r) \right\}.
$$
By Property 4, if $\weight_G(i)\equiv 1$ then $i \in \overline{R}$.  It follows that for all $r\in \overline{R}$,
$$
N_G(r)=\begin{cases}
\mathcal{L}(G)\cap [r-1], & \textrm{ if $r\equiv 0$}\\
\mathcal{L}(G)\cup [r, n], & \textrm{ if $r\equiv 1$}\\
\end{cases}\quad  \subseteq\quad  \mathcal{L}(G)\subseteq \overline{R}.
$$
\noindent Then 
$$
\mathcal{L}\left(G^{(t)}\right) =\mathcal{L}(G)\triangle N_{G}(t)\subseteq \overline{R}. 
$$
However, $\langle 1,t \rangle = 1$ because $t$ has full weight, so $1\in \mathcal{L}(G)\cap N_{G}(t)$ and
$$
\mathcal{L}\left(G^{(t)}\right)\subseteq R. 
$$
Similarly, for $r\in R$
$$
N_{G^{(t)}}(r)=N_G(r)\triangle N_G(t)\subseteq R 
$$
since $1\in N_G(r)\cap N_G(t)$.
It follows that the induced subgraphs $G_{(t)}\left[\overline{L}\right]$ and $G_{(t)}\left[R\right]$ are contained not connected by a path in $G_{(t)}$.  By applying Corollary \ref{superdiagonals are ones}, observe that $2\notin N_G(t)$ and $2\in N_G(1)$, so 
$$
N_{G^{(t)}}(1)=N_G(1)\triangle N_G(t)\ni 2,
$$
and therefore $G_{(t)}\left[\overline{L}\right]$ contains an edge but no loops.  It follows that $G^{(t)}$ does not admit a successful pressing sequence, a contraction.  Therefore $\sigma_1\not>1$.

We now proceed to show necessity: if $G\in \mathbf{CUP}_n$, then $U\in \mathcal{M}_n$.
Let $G\in \mathbf{CUP}_n$ with instructional Cholesky root $U=(u_{i,j})$.  By Lemmas \ref{pressed graph has nice cholesky} and \ref{principal submatrix lemma} we have that $G_{\boldsymbol{1}}, G-n \in \mathbf{CUP}_{n-1}$ and therefore by the inductive hypothesis $U_{\hat{1}}, U_{\hat{n}} \in \mathcal{M}_{n-1}$.  For simplicity we let $H=G-n$ and $w_i = \weight_G(i)$ for $i \in V(G)$ throughout the rest of this proof.  
\noindent It suffices to show the following four conditions hold for $U$:
\begin{itemize}
\item[(I)] Property 1 holds for the $n^{\textrm{th}}$ column,
\item[(II)] $w_n\geq w_{n-1}$,
\item[(III)] If $w_{n-2}>2$ then $w_n>w_{n-2}$,
\item[(IV)] If $w_n\neq n$ then the first column of $M$ is the only one with odd weight.
\end{itemize}
\noindent We have four cases to consider.\\

\noindent \textbf{First Case: $u_{1,n}=u_{2,n}=1$.}
Since $u_{2,n}=1$ then Property 4 of $U_{\hat{1}}$ gives us $u_{i,n}=1$ for all $2\leq i\leq n$.  It follows that $w_n=n$ and therefore (I), (II), (III), and (IV) hold.\\

\noindent \textbf{Second Case:} $u_{1,n}=u_{2,n}=0$. 
(I)  holds by Property 1 of $U_{\hat{1}}$.  Recall that the diagonal and super-diagonal entries of $U$ must be $1$ by Corollary \ref{superdiagonals are ones} so we need not consider the case where $n\leq 3$.  For $n=4$ we have two matrices to consider, 
$$
V_1=\begin{bmatrix}
1 & 1 & 0 & 0\\
0 & 1 & 1 & 0\\
0 & 0 & 1 & 1\\
0 & 0 & 0 & 1\\
\end{bmatrix}\quad \textrm{ and }\quad V_2=\begin{bmatrix}
1 & 1 & 1 & 0\\
0 & 1 & 1 & 0\\
0 & 0 & 1 & 1\\
0 & 0 & 0 & 1\\
\end{bmatrix}.
$$
$V_1$ satisfies (I) - (IV).  $V_2 \notin \mathbf{CUP}_4$ since $(3,4,1,2)$ is also a successful pressing sequence.  Thus we may assume $n\geq 5$ to show (II), (III) and (IV) hold.
\begin{claim}\label{Claim}
$u_{1,n-1}=0$
\end{claim}
\begin{myproof}{Claim}{\ref{Claim}}
Assume towards a contradiction that $u_{1,n-1}=1$. Property 1 of  $U_{\hat{n}}$ tells us that 
$$
w_{n-1}=\weight_{U_{\hat{n}}}(n-1)=n-1
$$
and so 
$$
\weight_{U_{\hat{1}}}(n-1)=w_{n-1}-u_{1,n-1}=n-2 .
$$
By Property 4 of $U_{\hat{1}}$, since $u_{2,n}=0$, then 
$$
\weight_{U_{\hat{1}}}(n-1)\equiv 0 .
$$
  It follows that 
$$
w_{n-1}=n-1\equiv 1.
$$
Recalling that $u_{1,n}=u_{2,n}=0$, by Property 2 of $U_{\hat{1}}$ we have 
$$
n-2\geq \weight_{U_{\hat{1}}}(n)\geq \weight_{U_{\hat{1}}}(n-1)=n-2
$$
and so 
$$
w_n=\weight_{U_{\hat{1}}}(n)=n-2\equiv 0. 
$$
\noindent Let 
$$
k=\min\limits_{1<i\leq n}\{i\mid w_i\equiv 1\}.
$$
Since $G\in \mathbf{CUP}_n$ and $k\neq 1$ then $G^{(k)}$ is not a pressable graph.  We will use this to arrive at a contradiction. Since $w_n\equiv 0$, 
$$
\mathcal{L}(G)=\mathcal{L}(H).
$$
 By the minimality of $k$,  by Property 4 of $U_{\hat{n}}$, and since $n-1\equiv 1$: 
$$
\mathcal{L}(H)=\{1\}\cup  \{k, k+2, \ldots, n-1\}.
$$
Observe that 
$$
\langle k,n \rangle_G=\osum_{i=1}^{n}u_{i,k}u_{i,n}= \osum_{i=1}^{2}(u_{i,k}\cdot 0)\oplus \osum_{i=3}^{k}(1\cdot 1)\oplus \osum_{i=k+1}^{n}(0\cdot u_{i,n})\equiv k-2\equiv 1 
$$
and for $i\in [n-1]$
$$
\langle k,i \rangle_G=\langle k,i \rangle_H\equiv \min(\weight_H(k),\weight_H(i))
$$
and so
$$
N_G(k)=\{1\}\cup [k, n].
$$
It follows that 
$$
\mathcal{L}(G^{(k)})=\mathcal{L}(G)\triangle N_G(k)=\{k+1, k+3, \ldots, n-2,n\}.
$$
Since $N_G(k)\cap [k-1]=\{1\}$, the only potential edge in $G\left[[k-1]\right]$ affected by pressing $k$ is the loop on $1$.  Furthermore $G\left[[k-1]\right] \in \mathbf{CUP}_{k-1}$ by Corollary \ref{principal submatrices} so we may conclude that $G^{(k)}\left[[k-1]\right]$ is connected.
\noindent If $k\neq n-1$ then $\langle k+1, n\rangle_G\equiv k-1\equiv 0$ so 
$$
N_G(k+1)=\{1,k\}
$$
 and so 
$$
N_{G^{(k)}}(k+1)=N_G(k)\triangle N_G(k+1) = [k+1, n].
$$
Then it follows that $G^{(k)}\left[[k+1, n]\right]$ is a connected graph with looped vertex $(k+1)$.  Observe that 
$$
\langle 1,n \rangle_G=0, \quad \langle 1,k \rangle_G=\langle k,n \rangle_G=1
$$
  and so 
$$
\langle 1,n \rangle_{G^{(k)}}=1
$$
Therefore, if $k\neq n-1$, $G^{(k)}$ is a connected graph with at least one looped vertex, contradicting the fact that $G\in \mathbf{CUP}_n$.  We must conclude that $k=n-1$.  Then $N_G(k)=\{1,n-1, n\}$ and so pressing $k$ only affects four pairs of vertices:
$$
\{(1,1),(1,n),(n-1,n-1), (n,n)\}.
$$
Once again, since $G\left[[k-1]\right]$ is connected, $G^{(k)}\left[[k-1]\right]$ must be connected as well (although possibly without loops).  However
$$
\langle n,n \rangle_G = 0, \quad \langle 1,n \rangle_G = 0, \textrm{ and }\quad \langle 1,k \rangle_G=\langle k,n\rangle_G=1
$$
 so 
$$
\langle 1,n \rangle_{G^{(k)}}=1 \quad \textrm{ and }\quad \langle n,n \rangle_{G^{(k)}}=1.
$$
  It follows that $G^{(k)}$ is a connected graph with at least one looped vertex, namely $n$.  This implies $G^{(k)}$ is a pressable graph, contradicting that $G\in \mathbf{CUP}_n$. Claim \ref{Claim} is established. 
\end{myproof}\newline

\begin{claim}\label{Claimm}
$u_{1,n-2}=0$
\end{claim}
\begin{myproof}{Claim}{\ref{Claimm}}
Assume towards a contradiction that $u_{1,n-2}=1$.  By Property 1 of $U_{\hat{n}}$ 
$$
w_{n-2}=\weight_{U_{\hat{n}}}(n-2)=n-2.
$$
  By Claim 1 
$$
\weight_{U_{\hat{n}}}(n-1)<n-1
$$
 and so by Property 4  
$$
\weight_{U_{\hat{n}}}(n-2)\equiv 0
$$
 and therefore $n\equiv 0$.
\noindent We then have 
$$
\weight_{U_{\hat{1}}}(n-2)=w_{n-2}-u_{1,n-2}=n-3\equiv 1 
$$
which contradicts Property 4 of $U_{\hat{1}}$ since $u_{2,n}=0$.  This establishes Claim \ref{Claimm}.
\end{myproof}\newline

We may now assume $u_{1,n-2}=u_{1,n-1}=0$ and proceed to show (II)-(IV).  (II) follows from Property 2 of $U_{\hat{1}}$ since 
$$
w_n=\weight_{U_{\hat{1}}}(n)\geq \weight_{U_{\hat{1}}}(n-1)=w_{n-1}. 
$$
To verify (III), observe that if $w_{n-2}>2$ then 
$$
\weight_{U_{\hat{1}}}(n-2)>2 \quad \Rightarrow \quad  w_n=\weight_{U_{\hat{1}}}(n)>\weight_{U_{\hat{1}}}(n-2)=w_{n-2}
$$
 by Property 3 on $U_{\hat{1}}$.
(IV) is established by observing that 
$$
u_{1,n-1}=0 \quad \Rightarrow \quad \weight_{U_{\hat{n}}}(n-1)<n-1\quad \Rightarrow \quad  \weight_{U_{\hat{n}}}(i)\equiv 0 \textrm{ for all $2<i< n$}
$$
 by Property 4 of $U_{\hat{n}}$ and so 
$$
w_i=\weight_{U_{\hat{n}}}(i)\equiv 0 \textrm{ for all $2<i< n$}.
$$
\noindent \textbf{Third Case:} $u_{1,n}=0$,  $u_{2,n}=1$.\newline 
By Property 1 of $U_{\hat{1}}$ we have that $u_{i,n}=1$ for all $2\leq i \leq n$.  It follows that $w_n=n-1$ and so (I) and (II) hold.  Furthermore, since $w_{n-2}\leq n-2$ then (III) holds.  To verify (IV), we need to show that $\mathcal{L}(G)=\{1\}$.
\begin{claim}\label{cclaim}
$\mathcal{L}(G)\neq\{1,n\}$.
\end{claim}
\begin{myproof}{Claim}{\ref{cclaim}}
Assume, by way of contradiction, that $\mathcal{L}(G)=\{1,n\}$.  Since $G\in\mathbf{CUP}_n$ and $n\neq 1$ then $G^{(n)}$ must contain a non-trivial loopless component $C$.  Choose and fix $p\in V(C)$.  Since $C$ is non-trivial we may assume $p\neq 1$.  Since $H\in \mathbf{CUP}_{n-1}$ there must exist a path from $p$ to a looped vertex in $H$.  Since $\mathcal{L}(H)=\{1\}$ this looped vertex must be $1$.  Choose such a path $P$, 
$$
P=v_0\ldots v_\ell
$$
 where $p=v_0$ and $v_\ell=1$.  Observe that 
$$
\langle 1,n\rangle_G=0
$$
 so $1$ is a looped vertex in $G^{(n)}$ as well.  Then some interior edge of $P$ must be removed upon pressing $n$ as otherwise $p$ would have a path to a looped vertex in $G^{(n)}$.  Let 
$$
j=\min\limits_{0\leq i<\ell}\{i\mid \langle v_i,n\rangle_G=1 \}
$$
then 
$$
P'=v_0P v_j
$$
 is a path from $p$ to a looped vertex in $G^{(k)}$, a contradiction.  This establishes Claim \ref{cclaim}.
\end{myproof}\newline
\begin{claim}\label{ccclaim}
If $n\equiv 0$ then $\mathcal{L}(G)=\{1\}$.
\end{claim}
\begin{myproof}{Claim}{\ref{ccclaim}}
Let $n\equiv 0$.  Assume, by way of contradiction, that $\mathcal{L}(G)\neq \{1\}$ and let
$$
k=\min\limits_{3\leq i\leq n-1}\{i\mid w_i\equiv 1\}.
$$
Choose a non-trivial loopless component $C$ of $G^{(k)}$ and a vertex $p \in V(C)\setminus \{1\}$.
Recall that $u_{1,n}=0$ and $u_{2,n}=1$.  Then 
$$
\langle n,n \rangle_G=\osum\limits_{i=1}^n u_{i,n}\equiv n-1\equiv 1 \quad \textrm{and}\quad \langle k, n \rangle_G=\osum\limits_{i=1}^n u_{i,k}\equiv k-1\equiv 0$$ implies $$\langle n,n \rangle_{G^{(k)}}=1
$$
 so $n\neq p$. For any $j\in [k+1, n-1]\setminus \mathcal{L}(G)$, since $H \in \mathbf{CUP}_{n-1}$ and $w_k \equiv 1$, Property 3 implies that $\langle k, j \rangle_H= 1$, whence 
$$
\langle j,j \rangle_G=0 \textrm{ and }\langle k, j \rangle_G= 1\quad \textrm{ implies }\quad \langle j,j \rangle_{G^{(k)}}=1
$$
 so $j$ is looped in $G^{(k)}$ and therefore $p\notin [k+1, n-1]\setminus \mathcal{L}(G)$. If $j\in [k+1, n-1]\cap \mathcal{L}(G)$ then $j\in [k+2, n-1]$ and
$$
\langle j,j-1 \rangle_G=0 \textrm{ and }\langle k, j-1 \rangle_G=\langle k, j \rangle_G= 1\quad \textrm{ implies }\quad \langle j,j-1 \rangle_{G^{(k)}}=1
$$
 so $p\notin [k+1, n-1]\cap \mathcal{L}(G)$.  Therefore 
$$
p\in [2,k-1].
$$
Since $G\left[[k-1]\right]\in \mathbf{CUP}_{k-1}$ then there exists a path $P$ in $G\left[[k-1]\right]$ connecting $p$ to a looped vertex; since $\mathcal{L}\left(G\left[[k-1]\right]\right)=\mathcal{L}(G)\cap [k-1]=\{1\}$ then the looped vertex must $1$.  
$$
P=v_0\ldots v_{\ell}
$$
 where $v_0=p$ and $v_{\ell}=1$. Note that 
$$
N_G(k)\cap V(P)=\{1\}
$$
because, if $i = \min \{j : v_j \in N_G(k)\}$ is not $1$, then $v_i$ is looped in $G^{(k)}$ and in the same component (namely, $C$) with $p$, a contradiction. Then all the interior edges of $P$ are unaffected by pressing $k$ (although the loop on $1$ is removed).  If $2\in V(P)$ then 
$$
P'=v_0P2n
$$
 is a path from $p$ to $n$ in $G^{(k)}$.  If $2 \notin V(P)$ then 
$$
P'=v_0Pv_\ell 2n
$$
 is a path from $p$ to $n$ in $G^{(k)}$.  Since $n$ is looped in $G^{(k)}$ this contradicts that $C$ is a non-trivial loopless component.  Claim \ref{ccclaim} is established.
\end{myproof}\newline
By Claims \ref{cclaim} and \ref{ccclaim} we have only one case left to consider.  Let $n\equiv 1$ and assume, by way of contradiction, that $\mathcal{L}(G)\neq \{1\}$.  Let
$$
k=\min\limits_{3\leq i\leq n-1}\{i\mid w_i\equiv 1\}.
$$
Choose a non-trivial component $C$ of $G^{(k)}$ and a vertex $p\in V(C)\setminus\{1\}$.  Observe that $n-1\equiv 0$ so $\weight_G(n-1)\equiv 0$ by Property 4 of $H$.  Then, by Property 4 of $H$
$$
\langle n-1,n-1 \rangle_G\equiv n-1\equiv 0 \textrm{ and }\langle k, n-1 \rangle_G\equiv k\equiv 1$$ which implies $$\langle n-1,n-1 \rangle_{G^{(k)}}=1
$$
so $n-1\neq p$. 
Similarly,
$$
\langle n-1,n \rangle_G=1\cdot 0\oplus \osum_{i=2}^{n-1}1\cdot 1 \equiv n-2\equiv 1 \textrm{ and }\langle k, n \rangle_G=  1\cdot 0\oplus \osum_{i=2}^{k}1\cdot 1 \equiv k-1\equiv 0 $$ implies $$\langle n-1,n \rangle_{G^{(k)}}=1
$$
so $n\neq p$ as it is connected to a looped vertex. If $j\in [k+1, n-2]\setminus \mathcal{L}(G)$ then
$$
\langle j,j \rangle_G=0 \textrm{ and }\langle k, j \rangle_G=1 \quad \textrm{ implies }\quad \langle j,j \rangle_{G^{(k)}}=1
$$
 so $p\notin [k+1, n-2]\setminus \mathcal{L}(G)$.  If $j\in [k+1, n-2]\cap \mathcal{L}(G)$ then $j\in [k+2, n-2]$ and
$$
\langle j,j-1 \rangle_G=0 \textrm{ and }\langle k, j-1 \rangle_G=\langle k, j \rangle_G=1 \quad \textrm{ implies }\quad \langle j,j-1 \rangle_{G^{(k)}}=1
$$
 so $p\notin [k+1, n-2]\cap \mathcal{L}(G)$.  It follows that 
$$
p\in [2,k-1].
$$
Since $G\left[[k-1]\right]\in \mathbf{CUP}_{k-1}$ has only $1$ as a looped vertex then $G\left[[k-1]\right]$ contains a path 
$$
P=v_0\ldots v_{\ell}
$$
 where $v_0=p$ and $v_{\ell}=1$.  Since 
$$
N_G(k)\cap [k-1]=\{1\}
$$
 then none of the interior edges are removed upon pressing $k$.  If $2\in V(P)$ we have 
$$
P'=v_0P2n(n-1)
$$
 is a path to a looped vertex in $G^{(k)}$ and if $2\notin V(P)$ then 
$$
P'=v_0Pv_\ell 2n(n-1)
$$
 is a path to a looped vertex in $G^{(k)}$.  This contradicts that $C$ is a loopless component in $G^{(k)}$. We conclude that $\mathcal{L}(G)=\{1\}$ and therefore establish (IV).\\
 
\noindent \textbf{Fourth Case:} $u_{1,n}=1$, $u_{2,n}=0$.\\
We show that this case cannot occur.   Assume it does to reach a contradiction.
Since $u_{2,n}=0$ then by Property 4  only the initial column  of $U_{\hat{1}}$ has odd weight.  It follows that 
$$
w_n=\weight_{U_{\hat{1}}}(n)+1\equiv 1.
$$
  Let 
$$
k=\min\limits_{2\leq i\leq n}\{i\mid w_i\equiv 1\}.
$$
\begin{claim}\label{cccclaim}
$k\neq n$
\end{claim}
\begin{myproof}{Claim}{\ref{cccclaim}}
Assume, by way of contradiction, that $k=n$. Then $\mathcal{L}(G)=\{1,n\}$.  Since $G\in \mathbf{CUP}_n$ we may conclude that $G^{(n)}$ contains a non-trivial loopless component $C$.  Choose and fix $p\in V(C)\setminus \{1\}$.   Since $H=G\left[[n-1]\right]\in \mathbf{CUP}_{n-1}$ there exists a path $P$ in $H$ that connects $p$ to a looped vertex, namely $1$. Let 
$$
P=v_0\ldots v_\ell
$$
 where $v_0=p$ and $v_\ell=1$.
Observe that 
$$
\langle v_\ell, n\rangle_G=\langle 1,n\rangle_G=1
$$
 and let 
$$
m=\min\limits_{0\leq i\leq \ell}\{i\mid v_i\in N_G(n)\}.
$$
If $m<\ell$ then 
$$
P'=v_0Pv_m
$$
 is a path to a looped vertex in $G^{(n)}$, contrary to assumption.  Assume $m=\ell$ and let $q=v_{\ell -1}$. 
$$
1=\langle v_\ell, v_{\ell-1}\rangle_G=\langle 1,q\rangle_G
$$
 so $u_{1,q}=1$.  By Property 1 of $U_{\hat{n}}$, $u_{i,q}=1$ for all $i\in [q]$ and so $w_q=q$.
\noindent Let 
$$
r=\min\limits_{2\leq i\leq n}\{i\mid u_{i,n}=1\}.
$$
  Since $\langle q, n\rangle_G=\langle v_{\ell-1}, n\rangle_G=0$ then 
$$
0=\osum\limits_{i=1}^{n}(u_{i,q}u_{i,n})= (1\cdot 1) \oplus \osum\limits_{i=2}^{r-1}(u_{i,q}\cdot 0) \oplus \osum\limits_{i=r}^{q}(1\cdot 1)\oplus \osum\limits_{i=q+1}^{n}(0\cdot 1)
$$
  and therefore 
$$
\langle q, n\rangle_G = 0=1\oplus \osum\limits_{i=r}^{q}1
$$
 which implies $r\leq q$. Furthermore $q\neq n-1$ since otherwise we would have 
$$
w_{n-1}=w_q=q=n-1
$$
 and  
$$
n-1\geq w_n=u_{1,n}+\weight_{U_{\hat{1}}}(n)=1+\weight_{U_{\hat{1}}}(n)\geq 1+\weight_{U_{\hat{1}}}(n-1)=1+(n-2) 
$$
which would imply 
$$
w_n=n-1=w_q\equiv 0
$$
 contradicting that $n$ is a looped vertex. Thus $q\neq n-1$ and so $q+1<n$ which gives us that $w_{q+1}\equiv 0$ by Property 4 of $U_{\hat{1}}$ and the fact that $u_{2,n}=0$.  However,
$$
q=w_q =\weight_{U_{\hat{n}}}(q) \leq \weight_{U_{\hat{n}}}(q+1) =w_{q+1}\leq q+1
$$
 implies $w_{q+1}=q$ since $q+1\equiv 1$ and therefore $u_{1,q+1}=0$ by Property 1 of $U_{\hat{n}}$.  Then 
$$
\langle q, q+1\rangle_G= \osum\limits_{i=1}^{n} (u_{i,q}\cdot u_{i,q+1})=(1\cdot 0)\oplus \osum\limits_{i=2}^{q}(1\cdot 1)\oplus (0\cdot 1)\oplus \osum\limits_{i=q+2}^{n}(0\cdot 0)=1
$$
 and  
$$
\langle q+1, n\rangle_G= (0\cdot u_{1,n} )\oplus \osum\limits_{i=2}^{r-1}(u_{i,q+1}\cdot 0) \oplus \osum\limits_{i=r}^{q+1}(1\cdot 1)\oplus \osum\limits_{i=q+2}^{n}(0\cdot u_{i,n}) 
$$
 and so 
$$ 
\langle q+1, n\rangle_G=\osum\limits_{i=r}^{q+1}1=1\oplus \osum\limits_{i=r}^{q}1=\langle q, n\rangle_G =0 
$$
\noindent Observe that 
$$
\langle 2,q+1\rangle_G=\osum_{i=1}^{2}u_{i,q+1}=1 $$
 so 
$$
P'=q(q+1)2
$$
 is a path in $G$ whose $q(q+1)$ and $(q+1)2$ edges are unaffected by pressing $n$ (since $(q+1)\notin N_{G}(n)$) and since $$\langle 2,2\rangle_G=\osum_{i=1}^{2}u_{i,2}=0\quad \textrm{and}\quad  \langle 2,n\rangle_G=\osum_{i=1}^{2}u_{i,n}=1$$ 
$$
P^*=pPqP'2
$$
 is a path from $p$ to a looped vertex in $G^{(n)}$, contrary to assumption that $p$ is contained in a loopless component of $G^{(n)}$.  This contradicts that $G\in \mathbf{CUP}_n$ and establishes Claim \ref{cccclaim}.
\end{myproof}

\noindent We proceed under the assumption that 
$$
k=\min\limits_{2\leq i\leq n}\{i\mid w_i\equiv 1\}<n,
$$
 once again in search of a contradiction.   Observe that we need not consider the case where $n\leq 3$ since the super-diagonal entries must be all $1$.  Furthermore, if $n=4$ we have only two matrices to consider, both of which have an additional successful pressing sequence given by $(4,3,2,1)$:
$$
\begin{bmatrix}
1&1&0&1\\ 0&1&1&0\\ 0&0&1&1\\ 0&0&0&1\\
\end{bmatrix} \quad \textrm{ and }\quad \begin{bmatrix}
1&1&1&1\\ 0&1&1&0\\ 0&0&1&1\\ 0&0&0&1\\
\end{bmatrix}.
$$
Assume $n\geq 5$.  By Property 4 of $U_{\hat{n}}$ we have $u_{1,n-1}=1$ and so by Property 1 and 2 of $U_{\hat{n}}$ 
$$
n-1\geq w_n= \weight_{U_{\hat{1}}}(n)+1 \geq  \weight_{U_{\hat{1}}}(n-1)+1 =(n-2)+1=n-1
$$
 which gives us $w_n=n-1$.  Observing that $\weight_{U_{\hat{1}}}(n)=n-2$ and $u_{2,n}=0$ we conclude from Property 4 of $U_{\hat{1}}$ that $n-2 \equiv 0$, so $n$ is looped in $G$.  Since $G\in \mathbf{CUP}_n$ then $G_{(n)}$ must contain a non-trivial loopless component, say $C$.  Choose and fix a vertex $p \in V(C)\setminus \{1\}$.  Observe that $G\left[[p]\right] \in \mathbf{CUP}_p$ by Corollary \ref{principal submatrices}. 
If $p\in \mathcal{L}(G)$ then pressing $n$ must remove its loop so  $$\langle 2,p\rangle_G =0  \quad \textrm{and }\langle 2,n\rangle_G =\langle p,n\rangle_G = 1$$ which implies that $\langle 2,p\rangle_{G_{(n)}} = 1$ and $2 \in \mathcal{L}(G_{(n)})$, contradicting that $C$ is loopless.  It follows that $p\notin \mathcal{L}(G)$ and therefore, in $G\left[[p]\right]$, we have a path $P$  from $p$ to a looped vertex $b$: 
$$
P=v_0\ldots v_\ell
$$
 where $v_0=p$, $v_\ell=b$, and $v_i$ is loopless in $G\left[[p]\right]$ for $0< i\leq \ell$.  Observe that $N_G(n)\cap \{v_1, v_2,\ldots,v_{\ell} \}=\emptyset$ since otherwise we would have a looped vertex in $C$.  It follows that the interior edges of $P$ are unaffected by pressing $n$ in $G$ and therefore it must be the case that pressing $n$ in $G$ removes the loop from $b=v_\ell$.  By Property 4 on $U_{\hat{n}}$ looped vertices in $H$ have full weight.  Observe that
$$
\osum\limits_{i=1}^{n}(u_{i,b}u_{i,n})= (1\cdot 1) \oplus (u_{2,b}\cdot 0) \oplus \osum\limits_{i=3}^{n}(u_{i,b}\cdot 1)\equiv\begin{cases}
1, & \textrm{ if $b= 1$}\\
1+ (b-2), & \textrm{ if $b\neq 1$}\\
\end{cases} 
$$
Since $1=\langle b,n \rangle_G=\osum\limits_{i=1}^{n}(u_{i,b}u_{i,n})$ this implies that $b=1$.  (Otherwise, $b$ is even but looped in $G$, contradicting Property 4.)
Observe that $w_3 = 3$ would imply $w_4=\weight_{U_{\hat{n}}}(4) = 4$ by Property 4 of $U_{\hat{n}}$, and then $\weight_{U_{\hat{1}}}(4)\equiv 1$ but $u_{2,n}=0$, contradicting Property 4 of $U_{\hat{1}}$.  Then 
$$
w_3=\weight_{U_{\hat{n}}}(3)=2
$$
by Property 2 of $U_{\hat{n}}$ and so 
$$
\langle 3,3\rangle_G=0 \quad\textrm{ and }\quad  \langle 3,n\rangle_G=1 \quad \textrm{ implies }\quad \langle 3,3\rangle_{G^{(n)}}=1
$$
 and so  $3\notin V(P)$.  Furthermore 
$$
\langle 1,3\rangle_G=0 \quad\textrm{ and }\quad  \langle 1,n\rangle_G=\langle 3,n\rangle_G=1 \quad \textrm{ implies }\quad \langle 1,3\rangle_{G^{(n)}}=1.
$$
Therefore 
$$
P'=v_0Pv_\ell 3
$$
 is a path to a looped vertex in $G^{(n)}$.  This implies $G\notin \mathbf{CUP}_n$ contrary to assumption.  Therefore Case 4 cannot occur. 
\end{proof}

\section{Recognition and Enumeration}
A straightforward and very slow way to check if a simple pseudo-graph on $n$ vertices is uniquely pressable is to check the pressability of each one of its $n!$ orderings. Here we offer a substantially faster algorithm.
\begin{corollary}
The unique pressability of $G$ can be decided in time $\mathcal{O}(n^3)$.
\end{corollary}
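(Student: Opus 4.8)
The plan is to turn Theorem \ref{BIG THEOREM} into a decision procedure and to bound the cost of each phase. First I would reduce to the connected case: compute the connected components of $G$ from its adjacency matrix $A$ in time $O(n^2)$, and use the corollary following Proposition \ref{succesful} to reject immediately (as not uniquely pressable) unless $G$ has exactly one non-trivial component $C$, also rejecting the degenerate cases of several looped isolated vertices. By that corollary together with Lemma \ref{unique implies full} applied to $C$, if $G$ is uniquely pressable then $C$ must occupy an initial segment $[k]$ of the order, every other vertex is isolated and loopless, and the pressing length equals $k=|V(C)|$. All of these structural conditions are checkable in $O(n^2)$, so it remains only to decide whether $G[[k]]=C$ is a $\mathbf{CUP}_k$ graph.

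Next I would compute the instructional Cholesky root $U=U(C)$ directly from its definition by simulating the pressing sequence $\boldsymbol{k}$: maintain the current adjacency matrix, and for $i=1,\ldots,k$ first read off its $i$th row as the $i$th row of $U$, then press $i$ by complementing the principal submatrix on $N(i)$. If at some step the vertex $i$ fails to be looped (equivalently $u_{i,i}=0$), then $C$ is not even order-pressable and we reject. Each press complements a submatrix of size at most $n\times n$ and hence costs $O(n^2)$, and there are at most $k\le n$ presses, so this phase runs in $O(n^3)$ and produces $U$ exactly when $C$ is order-pressable. Equivalently, one could test order-pressability via Proposition \ref{cooptastic} by checking that every leading principal minor of $A$ is nonzero and extract $U$ by an $\mathbb{F}_2$ Cholesky factorization, again in $O(n^3)$.

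Finally I would verify, in $O(n^2)$ time, that $U\in\mathcal{M}_k$, i.e.\ that $U$ satisfies Properties 1--4. After a single $O(n^2)$ pass to record the column weights $w_1,\ldots,w_k$, Property 1 (consecutive ones ending at the diagonal) is checked column by column, Property 2 ($1=w_1\le\cdots\le w_k$) and Property 3 ($w_i>2\Rightarrow w_{i+2}>w_i$) are single scans, and Property 4 (any non-initial odd-weight column, and every column to its right, has full weight) is checked in one more pass. By Theorem \ref{BIG THEOREM}, $C\in\mathbf{CUP}_k$ if and only if $U\in\mathcal{M}_k$, so accepting exactly when every structural check and all four properties hold decides unique pressability correctly. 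The total running time is dominated by the pressing simulation, giving $O(n^3)$.

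The main obstacle is the middle phase: every other step is a cheap scan, but materializing $U$ honestly requires either simulating up to $n$ presses---each of which must be implemented as an $O(n^2)$ submatrix complementation rather than something more expensive---or performing an $\mathbb{F}_2$ Cholesky factorization, and in either case one must argue that the procedure simultaneously certifies order-pressability and returns the correct root. A secondary point requiring care is the reduction itself: one must confirm that an arbitrary input is handled correctly (rejecting when the non-trivial component is not an initial segment, when there is more than one non-trivial component, or when a would-be trivial vertex carries a loop), so that Theorem \ref{BIG THEOREM}, which is stated for $\mathbf{CUP}$ graphs on $[k]$, is invoked only in the regime where it applies.
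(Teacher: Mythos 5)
There is a genuine gap, and it is exactly the part of the problem that the paper's proof is designed to solve. Your procedure presses the vertices of the non-trivial component $C$ in their \emph{given} index order: you simulate $\boldsymbol{k}$, reject if some vertex $i$ is unlooped at step $i$, and otherwise check $U\in\mathcal{M}_k$. That decides whether the ordered graph, with its given labeling, is uniquely pressable --- but the corollary concerns an arbitrary simple pseudo-graph, whose labeling carries no information (this is why the paper prefaces the corollary by noting that the naive method is to test each of the $n!$ orderings). Neither the reduction to one non-trivial component nor Lemma \ref{unique implies full} lets you assume that the input order is the pressing order: ``uniquely pressable'' for a pseudo-graph quantifies over all vertex sequences, so the algorithm must first \emph{find} the candidate order. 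On a uniquely pressable pseudo-graph whose vertices are labeled in anything other than its pressing order, your algorithm either rejects at the first unlooped vertex or computes a Cholesky root with respect to the wrong order and rejects at the $\mathcal{M}_k$ check; either way it answers incorrectly.

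The missing idea is the paper's order-recovery step. If $G'$ is a relabeling of a uniquely pressable graph so that $G'\in\mathbf{CUP}_n$, then for every looped vertex $k\neq 1$ one has $N_{G'}(k)=\{1\}\cup[k,n]$ (via Property 4 and the full-weight dot-product computation), while $N_{G'}(1)\supseteq\{2\}\cup N_{G'}(k)\supsetneq N_{G'}(k)$; hence vertex $1$ is the \emph{unique looped vertex of largest degree}. Iterating ``find the looped vertex of largest degree and press it'' therefore recovers the only possible pressing order, and this greedy pass is the paper's Algorithm 1, running in $\mathcal{O}(n^3)$ like in-place Gaussian elimination. After that step, your remaining phases (build $U$ by simulating the presses, then verify Properties 1--4) coincide with the paper's Algorithms 2 and 3, and your appeal to Theorem \ref{BIG THEOREM} closes the argument correctly: if the input is uniquely pressable, the greedy finds its unique sequence and the check passes; if not, either the greedy gets stuck or, should it find some successful sequence, the $\mathcal{M}_n$ test must fail, since passing it would itself certify unique pressability. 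So your verification phase is sound, but without the greedy order-recovery the algorithm solves only the order-pressable special case, not the recognition problem the corollary claims.
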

\begin{proof}
Let $G=(V,E)$ be a simple pseudo-graph on $n$ vertices.    Let $G'=([n], E')$ be the result of relabeling of $V(G)$ so that $G'$ is order-pressable. If $G$ is uniquely pressable then the instructional Cholesky root of $G'$ is in $\mathcal{M}_n$.  Observe that for each $k\in \mathcal{L}(G')\setminus \{1\}$, 
$$
N_{G'}(k)=\{1\}\cup [k,n]
$$
since by Property 4 each $\ell \in [k,n]$ has full weight and therefore $\langle k,\ell \rangle_{G'}\equiv k\equiv 1$.  However, for each  $k\in \mathcal{L}(G')\setminus \{1\}$ and $\ell \in [k,n]$, $\langle 1,\ell \rangle_{G'}=1$ by Property 4, and 
$$\langle 1,2 \rangle_{G'}=1 \quad \textrm{ and }\quad  \langle 2,k\rangle_{G'}\equiv 2\equiv 0.$$
Thus,
$$
N_{G'}(1)\supseteq \{2\}\cup N_{G'}(k)\supsetneq N_{G'}(k).
$$
Therefore $1$ is the unique looped vertex of largest degree.  It follows that to find a (potentially unique) pressing order it suffices to iterate the process of finding the looped vertex of largest degree and pressing it.
Index the vertices of graph $G$ arbitrarily and define $A=(a_{i,j})\in \mathbb{F}_2^{n\times n}$, the adjacency matrix of the graph.  Algorithm 1 finds a successful pressing sequence for $G$ (given that one exists) by finding the looped vertex of largest degree and pressing it; this has running time $\mathcal{O}(n^3)$, as it amounts to performing in-place Gaussian elimination on an $n\times n$ matrix.  Algorithm 2 computes the instructional Cholesky root of an ordered adjacency matrix and once again is done by performing Gaussian elimination. Finally, Algorithm 3 checks if an upper-triangular matrix has the properties of $\mathcal{M}_n$ which is done by computing no more than $n$ partial sums for each of $n$ columns and comparing them sequentially.  Algorithm 3 also has running time $\mathcal{O}(n^3)$.\\

\noindent \textbf{Algorithm 1:} Find a Pressing Order
\begin{algorithmic}[1]
\STATE \textbf{input:} Adjacency matrix $A$  with entries $a_{i,j}$ for $i,j\in [n]$
\STATE \textbf{output:}  Re-indexed matrix $P^TAP$
\STATE $M\leftarrow A$ 
\STATE $P\leftarrow 0_{n \times n}$
\STATE $t\leftarrow 1$ 
\WHILE{$t \leq n$}
\STATE $maxDegree \leftarrow 0$  
\STATE $indexMaxDegree \leftarrow 0$ 
\STATE $i\leftarrow 1$
\WHILE{$i\leq n$}
\STATE $deg_i\leftarrow 0$
\IF{$m_{i,i}=1$}
\STATE $deg_i\leftarrow \sum_{j=1}^{n}m_{i,j}$
\IF{$deg_i> maxDegree$}
\STATE $maxDegree \leftarrow d_i$
\STATE $indexMaxDegree \leftarrow i$
\ENDIF
\ENDIF
\STATE $i\leftarrow i+1$
\ENDWHILE
\IF{$indexMaxDegree =0$}
\IF{$\sum_{\ell=1}^{n}\sum_{j=1}^{n}m_{\ell,j}>0$}
\RETURN{False} \COMMENT{Not a Pressable Graph}
\ENDIF
\ELSE
\STATE $k\leftarrow indexMaxDegree$
\STATE $p_{t,k}\leftarrow 1$
\STATE $t \leftarrow t+1$
\FOR{$\ell\in [n]$}
\FOR{$j \in [n]\setminus \{k\}$}
\STATE $m_{\ell,j}\leftarrow m_{\ell,j}\oplus (m_{k,j} \cdot m_{\ell,k} )$ 
\ENDFOR
\ENDFOR
\FOR{$j\in [n]$}
\STATE $m_{k,j}\leftarrow 0$
\ENDFOR
\ENDIF
\ENDWHILE
\RETURN $P^TAP$
\end{algorithmic}\textrm{ }\\

\noindent \textbf{Algorithm 2:} Construct instructional Cholesky root
\begin{algorithmic}[1]
\STATE \textbf{input:} Adjacency matrix $A$  with entries $a_{i,j}$ for $i,j\in [n]$
\STATE \textbf{output:} Instructional Cholesky matrix $U$
\STATE $U\leftarrow 0_{n \times n}$
\STATE $k\leftarrow 1$
\WHILE{$k\leq n$}
\IF{$a_{k,k}=1$}
\FOR{$j\in [n]$}
\STATE $u_{k,j}\leftarrow a_{k,j}$
\ENDFOR
\FOR{$i\in [k+1,n]$}
\FOR{$j\in [n]$}
\STATE $a_{i,j}\leftarrow a_{i,j}\oplus (a_{k,j} \cdot a_{i,k})$
\ENDFOR
\ENDFOR
\STATE $k\leftarrow k+1$
\ELSE 
\STATE $k \leftarrow n+1$
\ENDIF
\ENDWHILE
\RETURN $U$
\end{algorithmic} \textrm{ }\\
\noindent \textbf{Algorithm 3:} Does this instructional Cholesky correspond to a uniquely pressable OSP?
\begin{algorithmic}[1]
\STATE \textbf{input:} Instructional Cholesky matrix $U$  with entries $u_{i,j}$ for $i,j\in [n]$
\STATE \textbf{output:}  True or False
\STATE $j\leftarrow 1$
\WHILE{$j\leq n$}
\STATE $i\leftarrow 1$
\WHILE{$i\leq j$}
\IF{$u_{i,j}=0$}
\STATE $i\leftarrow i+1$
\ELSIF{$\sum_{\ell=i}^{j}u_{\ell,j}< j-i+1$}
\RETURN False
\ENDIF
\ENDWHILE
\STATE $j\leftarrow j+1$
\ENDWHILE
\STATE $j \leftarrow 1$
\WHILE{$j<n$}
\IF{$\sum_{\ell=1}^{j}u_{\ell,j}>\sum_{\ell=1}^{j+1}u_{\ell,j+1}$}
\RETURN False
\ELSIF{$j\leq n-2$ and $\sum_{\ell=1}^{j}u_{\ell,j}>2$ and $\sum_{\ell=1}^{j+2}u_{\ell,j+2}=\sum_{\ell=1}^{j}u_{\ell,j}$}
\RETURN False
\ELSIF{$\sum_{\ell=1}^{j+1}u_{\ell,j+1}\equiv 1$ and $\sum_{\ell=1}^{j+1}u_{\ell,j+1}\neq j+1$}
\RETURN False
\ELSE
\STATE $j\leftarrow j+1$
\ENDIF
\ENDWHILE
\RETURN True
\end{algorithmic}
\end{proof}

\begin{corollary}\label{extending right}
Let $n>0$.  Let $G=([n], E)$ and let $H=([n+1], E(H))$ be the result of adding a vertex $``n+1"$ adjacent to each looped vertex in $G$, with a loop at $n+1$ if and only if $n$ is even.  If $G\in \mathbf{CUP}_n$, then $H\in \mathbf{CUP}_{n+1}$.
\end{corollary}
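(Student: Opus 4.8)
The plan is to pass to instructional Cholesky roots and invoke Theorem~\ref{BIG THEOREM}: since $G\in\mathbf{CUP}_n$, its root $U$ lies in $\mathcal M_n$, so it suffices to identify the root $U'$ of $H$ and verify $U'\in\mathcal M_{n+1}$. The guiding guess is that the new vertex $n+1$ has \emph{full weight}, i.e.\ that its column in $U'$ is the all-ones column. This is consistent with the construction of $H$: if column $n+1$ is all ones, then for $j\le n$ the inner product of columns $j$ and $n+1$ collapses to $\weight_G(j)\bmod 2$, which by the Observation following Proposition~\ref{U is Cholesky} equals $1$ exactly when $j$ is looped in $G$ — precisely the adjacency we imposed — while the corner entry becomes $(n+1)\bmod 2$, which is $1$ iff $n$ is even, matching the prescribed loop at $n+1$.

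First I would make this precise by setting up the candidate matrix
$$
\tilde U=\begin{bmatrix} U & \mathbf 1\\ \mathbf 0 & 1\end{bmatrix}\in\mathbb F_2^{(n+1)\times(n+1)},
$$
where $\mathbf 1$ is the all-ones column of height $n$ and $\mathbf 0$ is the zero row. A block computation of $\tilde U^T\tilde U$ shows that its top-left block is $U^TU=A(G)$, that its last row/column records the adjacency of $n+1$ to each $j\le n$ as $\weight_G(j)\bmod 2$, and that its corner is $(n+1)\bmod 2$; comparing with the definition of $H$ yields $\tilde U^T\tilde U=A(H)$. Because $\tilde U$ is upper triangular with unit diagonal (using Corollary~\ref{superdiagonals are ones} for $U$), $\det\tilde U=1$, and moreover every leading principal $k\times k$ submatrix of $A(H)$ equals $B_k^TB_k$, where $B_k$ is the leading $k\times k$ block of $\tilde U$, hence has determinant $1$. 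By Proposition~\ref{cooptastic}, $H$ is order-pressable with sequence $\boldsymbol{n+1}$, so its instructional Cholesky root $U'$ is well defined, and by uniqueness of the Cholesky decomposition of a full-rank symmetric matrix over $\mathbb F_2$ we conclude $U'=\tilde U$.

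It then remains to check $\tilde U\in\mathcal M_{n+1}$, property by property, leaning on $U\in\mathcal M_n$: columns $1,\dots,n$ of $\tilde U$ are those of $U$ padded with a trailing $0$, so their weights are unchanged and Property~1 survives, while the new column has its $1$'s running from row $1$ to row $n+1$ and so also satisfies Property~1. Property~2 reduces to $w_n\le n+1$, Property~3 to $w_{n-1}<n+1$ whenever $w_{n-1}>2$, and Property~4 is inherited because any non-initial odd-weight column of $U$ already forces full weight from its index through $n$, and the full-weight column $n+1$ extends this to the right; in particular an odd-weight column $n+1$ does have full weight $n+1$. Each of these is an immediate parity or size comparison.

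The only genuinely delicate point is the bookkeeping in the second step: confirming that the appended column is \emph{forced} to be all ones rather than merely being \emph{a} legal Cholesky completion, which is exactly what the full-rank uniqueness argument secures, together with checking that the corner parity $(n+1)\bmod 2$ aligns with the ``loop iff $n$ even'' clause. Connectivity and unique pressability of $H$ need no separate treatment, as they are delivered automatically by the reverse direction of Theorem~\ref{BIG THEOREM} once $U'\in\mathcal M_{n+1}$ is established.
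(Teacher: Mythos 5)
Your proposal is correct and follows essentially the same route as the paper: both construct the bordered matrix $\begin{bmatrix} U & \mathbf{1}\\ \mathbf{0} & 1\end{bmatrix}$, verify it is a Cholesky factorization of $A(H)$, invoke uniqueness of Cholesky roots for full-rank matrices to identify it as the instructional Cholesky root of $H$, and check membership in $\mathcal{M}_{n+1}$ to conclude via Theorem \ref{BIG THEOREM}. Your extra step through Proposition \ref{cooptastic} (leading principal minors) to justify that $H$ is order-pressable before identifying its instructional Cholesky root is a slightly more careful version of a point the paper's proof passes over implicitly, but it is the same argument in substance.
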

\begin{proof}
Suppose $G\in \mathbf{CUP}_n$ with adjacency matrix $A$ and instructional Cholesky root $U=(u_{i,j})$.  Observe that $H=([n+1], E(H))$ where 
$$
E(H)=\begin{cases}
E(G)\cup \{(i,n+1)\mid i\in \mathcal{L}(G) \}\cup \{(n+1, n+1)\}, & \textrm{ if $n\equiv 0$ }\\
E(G)\cup \{(i,n+1)\mid i\in \mathcal{L}(G) \}, & \textrm{ if $n\equiv 1$ }\\
\end{cases}
$$
Let 
$$
V=\left[\begin{array}{c|c}
\begin{matrix} &&\\&   \mbox{{$U$}}&\\
    &&\\ \end{matrix}  & \begin{matrix} 1 \\ \vdots  \\ 1\\
\end{matrix}\\ \hline 
\begin{matrix} 0 &\cdots & 0 \\
\end{matrix}  &  1\\
\end{array}\right] 
$$
and observe that
$$
V^T\cdot V=\left[\begin{array}{c|c}
\begin{matrix} &&\\&   \mbox{{$A$}}&\\
    &&\\ \end{matrix}  & \begin{matrix} b_{1,n+1} \\ \vdots  \\ b_{n,n+1}\\
\end{matrix}\\ \hline 
\begin{matrix} b_{n+1, 1} &\cdots & b_{n+1,n} \\
\end{matrix}  &  b_{n+1, n+1}\\
\end{array}\right]
$$
  where $b_{n+1, i}=1$ if and only if $\weight_{V}(i)\equiv 1$ if and only if $  i\in \mathcal{L}(G) $ or $i=n+1 \equiv 1$.
It follows that $V^TV$ is a Cholesky factorization for the adjacency matrix of $H$.  Since $G\in \mathbf{CUP}_n$ then $U$ has $1$'s along the diagonal and so $U$ and $V$ are full rank matrices. 
Since Cholesky factorizations are unique for full-rank matrices \cite{cooper2016successful} then $V$ must be the instructional Cholesky root of $H$. 
$V$ inherits the properties of $\mathcal{M}_n$ in its first $n$ columns from $U$. Furthermore the last column of $V$ has full weight $n+1$, so $V\in \mathcal{M}_{n+1}$, and therefore $H\in \mathbf{CUP}_{n+1}$.
\end{proof}

\begin{corollary}\label{extending left}
Let $n>0$.  Let $G=([2,n+1], E)$ and $H=([n+1],E(H) )$ be the result of removing all the edges, including loops, from $G[\mathcal{L}(G)]$, adding a looped vertex `` $1$'' adjacent to all of $\mathcal{L}(G)$. If $G\in \mathbf{CUP}_n$ then $H\in \mathbf{CUP}_{n+1}$.
\end{corollary}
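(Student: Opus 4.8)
The plan is to follow the template of Corollary \ref{extending right}, exhibiting the instructional Cholesky root of $H$ as a bordered copy of that of $G$ and then appealing to Theorem \ref{BIG THEOREM}. After relabeling $G$ to the vertex set $[n]$ (writing $U \in \mathcal{M}_n$ for its instructional Cholesky root, $w_1 \le \cdots \le w_n$ for the column weights of $U$, and $\ell \in \mathbb{F}_2^{n}$ for the indicator vector of $\mathcal{L}(G)$, so that $\ell_j = 1$ exactly when $w_j$ is odd), I would relabel $H$ to $[n+1]$ with the new looped vertex as $1$ and set
$$
V = \left[\begin{array}{c|c} 1 & \ell^{T} \\ \hline \mathbf{0} & U \end{array}\right].
$$
The top row encodes that vertex $1$ is looped and adjacent to exactly $\mathcal{L}(G)$, matching the construction of $H$, while the identification $V$-index $c \leftrightarrow U$-index $c-1$ makes the lower-right block agree with $U$. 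I would then verify $V^{T}V = A(H)$, note that $V$ is upper-triangular with unit diagonal and hence full rank, and conclude from the uniqueness of $\mathbb{F}_2$-Cholesky factorizations of full-rank matrices that $V$ is the instructional Cholesky root of $H$.

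The crux is the structural fact that $G[\mathcal{L}(G)]$ is a complete graph with all loops present; this is exactly what reconciles the prescription ``remove all edges and loops inside $G[\mathcal{L}(G)]$'' with the clean bordered form of $V$. Since $U \in \mathcal{M}_n$, Property 2 gives $w_1 = 1$ and Property 4 forces every looped vertex to have full weight (the first trivially so). For two such full-weight columns of indices $a < b$ one computes $\langle a,b\rangle_G \equiv a \equiv 1$, since $a$ is odd; hence any two looped vertices are adjacent, and of course each carries a loop. Consequently, for $j,j' \in \mathcal{L}(G)$ the entry of $V^{T}V$ in the corresponding rows and columns is $\ell_j\ell_{j'} \oplus A(G)_{j,j'} = 1 \oplus 1 = 0$, matching the removal of edges and loops inside $\mathcal{L}(G)$; for every other pair $\ell_j\ell_{j'} = 0$ and the entry is inherited from $A(G)$, while the border reproduces the adjacencies of the new vertex $1$. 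This gives $V^{T}V = A(H)$.

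It then remains to check $V \in \mathcal{M}_{n+1}$, for which the governing identity is that the column weights $W_1, \ldots, W_{n+1}$ of $V$ satisfy $W_1 = 1$ and, for $2 \le c \le n+1$,
$$
W_c = w_{c-1} + \ell_{c-1} = 2\left\lceil w_{c-1}/2 \right\rceil,
$$
since $\ell_{c-1} = 1$ precisely when $w_{c-1}$ is odd and then full weight forces $w_{c-1} = c-1$. In particular every non-initial column of $V$ has even weight, so $\mathcal{L}(H) = \{1\}$ and Property 4 holds vacuously. Property 1 is inherited from $U$ when the top entry of a column is $0$, and when that entry is $1$ the full weight of the matching $U$-column (Property 4 of $U$) makes the whole $V$-column full. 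Property 2 follows because $W_1 = 1 \le W_2$ and, for larger indices, $t \mapsto \lceil t/2\rceil$ is nondecreasing while $w_1 \le \cdots \le w_n$. Property 3 follows from Property 3 of $U$ by a short parity split: when $w_{c-1}$ is even, $w_{c+1} > w_{c-1}$ already gives $\lceil w_{c+1}/2\rceil > \lceil w_{c-1}/2\rceil$, and when $w_{c-1}$ is odd, full weight propagates to the right so that $w_{c+1} = w_{c-1} + 2$, again forcing a strict increase. With $V \in \mathcal{M}_{n+1}$ in hand, Theorem \ref{BIG THEOREM} yields $H \in \mathbf{CUP}_{n+1}$, and in particular the connectivity required of a $\mathbf{CUP}$ graph comes for free.

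The main obstacle is the structural lemma of the second paragraph: it is the one place where the detailed architecture of $\mathcal{M}_n$—specifically the propagation of full weight under Property 4—is genuinely used, and without it the bordered matrix $V$ would fail to satisfy $V^{T}V = A(H)$. A secondary nuisance is the boundary case $w_{c-1} = 3$ in the verification of Property 3, where monotonicity of the ceiling alone does not suffice; it is resolved only because an odd weight forces full weight and hence a strict jump two columns later.
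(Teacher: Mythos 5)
Your proof is correct, but it follows a genuinely different route from the paper's. The paper disposes of this corollary with a short direct argument at the level of pressing sequences: since every loop inside $\mathcal{L}(G)$ has been removed, $1$ is the only looped vertex of $H$, so any successful pressing sequence of $H$ must begin with $1$; and because Property 4 forces $\mathcal{L}(G)$ to induce a complete subgraph of $G$ with all loops present (exactly the structural fact you isolate in your second paragraph), pressing and then deleting $1$ complements the empty graph on $\mathcal{L}(G)$ back into that looped clique, i.e.\ $H^{(1)}=G$. Uniqueness for $H$ is then inherited at once from uniqueness for $G$, with no computation of $H$'s Cholesky root and no verification of Properties 1--4. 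Your argument instead transplants the method of Corollary \ref{extending right}: border $U$ by the indicator vector of $\mathcal{L}(G)$, verify $V^{T}V=A(H)$ and $V\in\mathcal{M}_{n+1}$, and apply Theorem \ref{BIG THEOREM} in both directions (necessity to get $U\in\mathcal{M}_n$, sufficiency to conclude $H\in\mathbf{CUP}_{n+1}$). This is more laborious --- notably the parity split in Property 3, where you rightly observe that monotonicity of $t\mapsto\lceil t/2\rceil$ alone is insufficient when $w_{c-1}$ is odd and the full-weight propagation of Property 4 must be invoked --- but it buys something the paper's proof does not: the explicit instructional Cholesky root of $H$, which is precisely the bordered matrix (first row carrying $1$'s over the odd-weight columns) that the paper later writes down as $V_2=V_3$ in the enumeration corollary, and it renders the two extension corollaries methodologically symmetric. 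Your appeal to uniqueness of full-rank Cholesky factorizations over $\mathbb{F}_2$ to identify $V$ as the instructional root is the same (slightly informal) step the paper itself takes in Corollary \ref{extending right}, so your argument is at par with the paper's own standard of rigor.
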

\begin{proof}
Let $G\in \mathbf{CUP}_n$, recall that by Property 4 (of its instructional Cholesky root) the looped vertices in $G$ form a clique.  The only looped vertex in $H$ is $1$ so if $H$ admits a successful pressing sequence it must begin with $1$. However  $H^{(1)}=G$ since pressing and deleting $1$ creates an edge between any two vertices in $\mathcal{L}(G)$.  It follows that $H$ has exactly one successful pressing sequence: $\mathbf{n+1}$.  $H\in \mathbf{CUP}_{n+1}$.
\end{proof}
\begin{notation}
$\mathbf{CUP}_{[n]}$ is the  set of connected, uniquely pressable ordered ($<_{\mathbb{N}}$) simple pseudo-graphs on vertex set $[n]$.    
\end{notation}
\begin{corollary}The number of connected, uniquely pressable simple pseudo-graphs on $n>1$ vertices up to isomorphism is
$$
\left|\mathbf{CUP}_{[n]}\right|=\begin{cases}3^{(n-2)/2}, & n \textrm{ is even}\\2\cdot 3^{(n-3)/2}, & n \textrm{ is odd}\\ \end{cases}
$$
\end{corollary}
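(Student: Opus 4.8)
The plan is to turn the enumeration into a count of the admissible matrices $\mathcal{M}_n$, reduce that to a count of integer ``weight vectors,'' and then evaluate the latter by a short recursion. First I would argue that the number of isomorphism classes of connected, uniquely pressable simple pseudo-graphs on $n$ vertices equals $|\mathbf{CUP}_{[n]}|$, and that this equals $|\mathcal{M}_n|$. Every such graph has exactly one successful pressing sequence; ordering its vertices along that sequence yields an order-pressable, uniquely pressable graph, and by Lemma \ref{unique implies full} the sequence exhausts the vertex set, so the result is a $\mathbf{CUP}_{[n]}$ graph. Conversely, distinct members of $\mathbf{CUP}_{[n]}$ are pairwise non-isomorphic: since pressing commutes with relabeling, an isomorphism $\phi\colon G\to G'$ carries the unique pressing sequence $(1,\dots,n)$ of $G$ to the unique pressing sequence $(1,\dots,n)$ of $G'$, forcing $\phi(i)=i$ and hence $G=G'$. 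Finally, by Theorem \ref{BIG THEOREM} the map $G\mapsto U(G)$ is a bijection $\mathbf{CUP}_{[n]}\to\mathcal{M}_n$, so $|\mathbf{CUP}_{[n]}|=|\mathcal{M}_n|$.

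Next I would reduce the count of $\mathcal{M}_n$ to a count of column-weight vectors. By Property 1 each column of $M\in\mathcal{M}_n$ is a consecutive block of $1$'s ending on the diagonal, so $M$ is determined by its weight vector $(w_1,\dots,w_n)$; thus $|\mathcal{M}_n|$ equals the number of integer vectors with $w_1=1$, $1\le w_j\le j$, satisfying Properties 2, 3, and 4. I would then use Property 4 to split such a vector into a head and a tail: either all of $w_2,\dots,w_n$ are even, or there is a least index $k$ (necessarily odd) with $w_k$ odd, in which case $w_k=\cdots=w_n$ are full while $w_2,\dots,w_{k-1}$ are even. The monotonicity and Property-3 conditions across the interface are automatic, so a valid vector is equivalent to a choice of an \emph{even head} $w_2,\dots,w_m$ (even entries, starting at $2$, nondecreasing, bounded by index, and, by Property 3, able to repeat across a gap of two only at value $2$) together with the location of the full tail.

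Let $E(m)$ denote the number of valid even heads occupying columns $2,\dots,m$. The decomposition above gives
$$a_n := |\mathcal{M}_n| = E(n) + \sum_{\substack{2\le m\le n-1 \\ m \text{ even}}} E(m).$$
I would evaluate $E(m)$ with a transfer-matrix / two-term recursion: because Property 3 constrains columns two apart, the natural state is the pair of last two weights, and tracking it yields $E(m)=3E(m-2)$ for $m\ge 5$ with base values $E(2)=1,\ E(3)=1,\ E(4)=2,\ E(5)=3$, hence $E(2k)=2\cdot 3^{k-2}$ and $E(2k+1)=3^{k-1}$. Substituting these into the display and summing the resulting geometric series gives $a_n=3^{(n-2)/2}$ for even $n$ and $a_n=2\cdot 3^{(n-3)/2}$ for odd $n$, which is exactly the claimed count.

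The two reductions are routine; the main obstacle is the evaluation of $E(m)$, precisely because Property 3 couples non-adjacent columns, so the even heads do \emph{not} factor as an independent choice per column and the count is not a naive product. I expect to handle this by making ``the last two weights'' the state of a transfer matrix and proving the doubling-every-two-steps identity $E(m)=3E(m-2)$ directly (morally, each successive pair of columns is encoded by one of three local moves, with the caveat that the admissibility of a move depends on the running state, which is why the factor of $3$ is visible only after summing). Some care is also needed with the interaction of Properties 3 and 4 at the head/tail interface and with even-indexed full-weight columns, which may appear inside the even head without triggering the odd transition.
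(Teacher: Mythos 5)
Your route is genuinely different from the paper's, and most of it is sound. The paper never enumerates $\mathcal{M}_n$ directly: it runs an induction at the graph level, first proving two extension operations preserve unique pressability (Corollary \ref{extending right}: append a last vertex joined to all looped vertices; Corollary \ref{extending left}: prepend a looped vertex joined to all looped vertices, deleting the edges among them), and then using Lemmas \ref{pressed graph has nice cholesky} and \ref{principal submatrix lemma} plus a dichotomy on the corner entries of the Cholesky root to show that, for even $n$, every member of $\mathbf{CUP}_{[n+1]}$ (resp.\ $\mathbf{CUP}_{[n+2]}$) arises from exactly one member of $\mathbf{CUP}_{[n]}$ in exactly one of $2$ (resp.\ $3$) ways; the factors $2$ and $3$ come out bijectively, with essentially no counting. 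Your reductions, by contrast, are all correct as stated: isomorphism classes correspond to $\mathbf{CUP}_{[n]}$, which corresponds to $\mathcal{M}_n$ by Theorem \ref{BIG THEOREM}, which by Property 1 corresponds to weight vectors; the split into an even head and a full-weight tail is legitimate (the interface instances of Properties 2 and 3 are indeed automatic, since full weight dominates every admissible earlier weight); your base values and closed forms for $E$ are right; and the geometric-series bookkeeping does return the stated formula. If completed, your argument would be a self-contained enumeration of a family of $0$--$1$ matrices, with the bonus of refined data (the counts $E(m)$ by tail position), which the paper does not provide.

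The gap is the identity $E(m)=3E(m-2)$, which you correctly flag as the crux but do not prove, and it is not the routine transfer-matrix computation your sketch suggests. The state ``last two weights'' ranges over an unbounded set, and the admissible transitions change with the column index through the bound $w_j\le j$ and its parity, so there is no fixed finite transfer matrix whose eigenvalue produces the $3$. Worse, the tripling is visible only in aggregate: the six heads on columns $2,\dots,6$, namely $(2,2,2,2,2)$, $(2,2,2,2,4)$, $(2,2,2,2,6)$, $(2,2,2,4,4)$, $(2,2,2,4,6)$, $(2,2,4,4,6)$, admit respectively $9,4,1,2,1,1$ completions to columns $7,8$ --- these sum to $18=3\cdot 6$, but no head individually contributes $3$. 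A genuine proof therefore needs either an induction on the full distribution of endings (say, indexed by the slack $\lfloor m/2\rfloor-w_m/2$ together with a flag for a repeated value exceeding $2$), showing this distribution reproduces itself scaled by $3$ every two columns, or an explicit bijection between heads of length $m$ and $\{1,2,3\}\times$ heads of length $m-2$; neither is supplied, and neither is a one-line observation. Until that lemma is established, everything else in your proposal is a correct reduction to an unproved statement --- which is exactly the hard part that the paper's peel-off-the-ends induction was designed to avoid.
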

\begin{proof}
For $n=2$, the result holds since 
$$
\mathbf{CUP}_{[2]}=\left\{\left([2], \{(1,1), (1,2)\}\right)\right\}.
$$
We proceed by induction.  Let $n\geq 2$ be even and assume $\left|\mathbf{CUP}_{[n]}\right|=3^{(n-2)/2}$.  Choose and fix $G=([n], E)\in \mathbf{CUP}_{[n]}$ with adjacency matrix $A$ and instructional Cholesky root $U=(u_{i,j})$. Let $G'=([2,n+1], E')$ be a re-indexing of $G$ given by $i\mapsto i+1$ for all $i\in [n]$.
Let $H_1=([n+1], E_1)$, $H_2=([n+1], E_2)$ where 
$$
E_1=E\cup \{(i,n+1)\mid i\in \mathcal{L}(G) \}\cup \{(n+1, n+1)\}
$$
 and 
$$
E_2=E'\triangle \left(\left( \mathcal{L}(G')\cup \{1\}\right) \times \left(\mathcal{L}(G')\cup \{1\}\right)\right).
$$
By Corollaries \ref{extending right} and \ref{extending left}, $H_1, H_2\in \mathbf{CUP}_{[n+1]}$. $H_1$ has at least two looped vertices ($1$ and $n+1$) and $H_2$ has only one looped vertex ($1$), so $H_1\neq H_2$.  Furthermore, the instructional Cholesky roots of $H_1$ and $H_2$ include $U$ as a principal submatrix on consecutive rows and columns so it is not possible that we would have gotten $H_1$ or $H_2$ by applying Corollaries \ref{extending right} or \ref{extending left} to other graphs in $\mathbf{CUP}_{[n]}$.  It follows that 
$$
\left|\mathbf{CUP}_{[n+1]}\right|\geq 2\cdot \left|\mathbf{CUP}_{[n]}\right|=2\cdot 3^{(n-3)/2}.
$$
Consider now any $H\in \mathbf{CUP}_{[n+1]}$ with instructional Cholesky root $V=(v_{i,j})$.  If $v_{1,n+1}=0$ then let $G'=H^{(1)}$ and let $G$ be the re-indexing of $V(G)$ given by $i\mapsto i-1$ for all $i\in [2,n+1]$.  By Lemma \ref{pressed graph has nice cholesky}, $G\in  \mathbf{CUP}_{[n]}$ and therefore $H$ can be constructed from $G$ using Corollary \ref{extending left}.  If $v_{1,n+1}=1$ then $v_{i, n+1}=1$ for all $i\in [n+1]$ because of Property 4 by appeal to Theorem \ref{BIG THEOREM}.  Let $G=H-\{n+1\}$. By Lemma \ref{principal submatrix lemma}, $G\in \mathbf{CUP}_{[n]}$ and hence $H$ can be obtained by applying Corollary \ref{extending right} to $G$.  It follows that 
$$
\left|\mathbf{CUP}_{[n+1]}\right|\leq  2\cdot \left|\mathbf{CUP}_{[n]}\right|=2\cdot 3^{(n-3)/2}.
$$
We count $\left|\mathbf{CUP}_{[n+2]}\right|$ in a similar, though slightly more complicated, manner.
Given $G\in \mathbf{CUP}_{[n]}$, let $H_1$ be result of two successive applications of Corollary \ref{extending right}.  Let $H_2$ be the result of applying Corollary \ref{extending right} followed by Corollary \ref{extending left}, let $H_3$ be the result of applying Corollary \ref{extending left} followed by Corollary \ref{extending right}, and let $H_4$ be the result of applying Corollary \ref{extending left} twice successively.  We first show that $H_2=H_3$ and then argue that $H_1, H_2, H_4$ are pairwise distinct.
Let $V_2$ and $V_3$ be the instructional Cholesky roots of $H_2$ and $H_3$, respectively.  Then
$$
V_2=\left[\begin{array}{c|c|c}
1 &\begin{matrix} b_{1,2} &\cdots & b_{1,n}  \\
\end{matrix}  &  1\\ \hline 
\begin{matrix}
0 \\ \vdots \\ 0 \\
\end{matrix}
&\begin{matrix} &&\\&   \mbox{{$U$}}&\\
    &&\\ \end{matrix}  & \begin{matrix} 1 \\ \vdots  \\ 1\\
\end{matrix}\\\hline  
0 &\begin{matrix} 0 &\cdots & 0\\
\end{matrix}  &  1\\
\end{array}\right]=V_3 
$$
where $b_{1,i}=1$ whenever it is positioned above a column of odd weight in $U$.  Therefore $H_2=H_3$.  Observe that $H_1$ has at least two looped vertices, $1$ and $n+1$, whereas $H_2$ and $H_4$ have only one looped vertex ``$1$''. Since $n$ is even, vertex $n$ in $G$ is loopless by Property 4.  Then the instructional Cholesky root of $H_4$ has the form 
$$
V_4=\left[\begin{array}{c|c|c}
1 &1&\begin{matrix} b_{1,3} &\cdots & b_{1,n+2}  \\
\end{matrix} \\ \hline
0 &1&\begin{matrix} b_{2,3} &\cdots & b_{2,n+2}  \\
\end{matrix} \\ \hline
\begin{matrix}
0 \\ \vdots \\ 0 \\
\end{matrix} &\begin{matrix}
0 \\ \vdots \\ 0 \\
\end{matrix}
&\begin{matrix} &&\\&   \mbox{{$U$}}&\\
    &&\\ \end{matrix} \\
\end{array}\right]
$$
where $b_{1,n+2}=b_{2,n+2}=0$ and so $H_4\neq H_2$.  It follows that 
$$
\left|\mathbf{CUP}_{[n+2]}\right|\geq 3\cdot \left|\mathbf{CUP}_{[n]}\right|=3^{((n+2)-3)/2}.
$$
Choose and fix $H\in \mathbf{CUP}_{[n+2]}$ with instructional Cholesky root $V=(v_{i,j})$.  Let  $G_1=\left(H-\{n+2\}\right)-\{n+1\}$, $G'_2=H^{(1)}-\{n+2\}$, and  $G'_3=H^{(1,2)}$. Let $G_2$ and $G_3$ be (order-preserving) re-indexings of $G'_2$ and $G'_3$ so that $V(G_2)=V(G_3)=[n]$.   By (repeated) applications of Lemmas \ref{pressed graph has nice cholesky} and  \ref{principal submatrix lemma}; $G_1, G_2, G_3 \in \mathbf{CUP}_{[n]}$.  Let $\alpha=v_{1,n+1}+v_{1,n+2}$.  If $\alpha=2$ then $v_{1,n+1}=v_{1,n+2}=1$ and by Property 1, $v_{i,n+1}=1$ for all $i\in [n+1]$ and $v_{i,n+2}=1$ for all $i\in [n+2]$.  Then $H$ can be constructed from two applications of Corollary \ref{extending right} to $G_1$. 
If $\alpha=1$ then $v_{1, n+1}=0$ and $v_{1, n+2}=1$ by Property 4.  Furthermore, Property 4 implies that $1$ is the only looped vertex, since $n+2$ is even and $v_{1, n+1}=0$.  Observing that $n+2$ must be a full weight vertex, we conclude that $H$ can be constructed from $G_2$ by application of Corollaries \ref{extending right} and \ref{extending left} (in either order).  Finally if $\alpha=0$ then $v_{1,n+2}=0$ and by Property 4, and since $n$ is even, $v_{2,n+2}=0$.  Furthermore by Property 4 it follows that $H$ and $H^{(1)}$ have each only one looped vertex.  Then $H$ can be constructed from $G_3$ by two applications of Corollary \ref{extending left}. 
\noindent It follows that 
$$
\left|\mathbf{CUP}_{[n+2]}\right|\leq 3\cdot \left|\mathbf{CUP}_{[n]}\right|=3^{((n+2)-3)/2}.
$$
Therefore 
$$
\left|\mathbf{CUP}_{[n]}\right|=\begin{cases}3^{(n-2)/2}, &\textrm{if $n$ is even}\\ 2\cdot 3^{(n-3)/2}, & \textrm{if $n$ is odd}\\ \end{cases}.
$$
\end{proof}
\begin{corollary}
The number of uniquely pressable simple pseudo-graphs on $n>1$ vertices up to isomorphism is 
$$
T_n=\begin{cases}\left(5\cdot 3^{(n-2)/2} +1\right)/2, &\textrm{if $n$ is even}\\ \left(3^{(n+1)/2} +1\right)/2, & \textrm{if $n$ is odd}\\ \end{cases}
$$
\end{corollary}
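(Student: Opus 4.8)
The plan is to reduce the count of all uniquely pressable pseudo-graphs to the already-established count $|\mathbf{CUP}_{[k]}|$ of the connected ones, and then evaluate a geometric sum. First I would invoke the corollary following Proposition~\ref{succesful}: a uniquely pressable pseudo-graph with at least one edge has exactly one non-trivial component $C$, and every remaining vertex is loopless and isolated. Because all loopless isolated vertices are mutually isomorphic, the isomorphism class of such a graph on $n$ vertices is determined by the isomorphism class of $C$ alone (with $n$ fixed). Conversely, padding any connected uniquely pressable graph on $k$ vertices with $n-k$ loopless isolated vertices produces a uniquely pressable graph on $n$ vertices. Hence, for each $1 \le k \le n$, there is a bijection between isomorphism classes of uniquely pressable graphs on $n$ vertices whose non-trivial component has $k$ vertices and isomorphism classes of connected uniquely pressable graphs on $k$ vertices, the latter numbering $|\mathbf{CUP}_{[k]}|$ by the previous corollary (with $|\mathbf{CUP}_{[1]}| = 1$, the single looped vertex). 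The only class with no non-trivial component is the edgeless graph, which is uniquely pressable via the empty pressing sequence. This yields the master identity
$$
T_n = 1 + \sum_{k=1}^{n} |\mathbf{CUP}_{[k]}|.
$$

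Next I would evaluate this sum by grouping the terms according to the parity of $k$. Writing even indices as $k = 2m$ (contributing $3^{m-1}$) and odd indices $k \ge 3$ as $k = 2m+1$ (contributing $2 \cdot 3^{m-1}$), with $k = 1$ contributing $1$, both groups are geometric with ratio $3$: the even group collapses via $\sum_{m=1}^{N} 3^{m-1} = (3^{N}-1)/2$, and the odd group (including the $k=1$ term) via $1 + \sum_{m=1}^{M} 2 \cdot 3^{m-1} = 3^{M}$ for the appropriate upper limit $M$. For even $n = 2N$, adding the even-index sum $(3^{N}-1)/2$, the odd-index sum $3^{N-1}$, and the leading $1$, and using $3^{N} + 2 \cdot 3^{N-1} = 5 \cdot 3^{N-1}$, gives $(5 \cdot 3^{(n-2)/2}+1)/2$. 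For odd $n = 2N+1$ the odd-index sum is instead $3^{N}$, and the identical bookkeeping produces $(3^{(n+1)/2}+1)/2$, matching the two stated cases.

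I expect the main difficulty to be the isomorphism bookkeeping rather than the arithmetic: I must confirm that the ``exactly one non-trivial component'' structure makes the decomposition $G \cong C \sqcup (\text{isolated loopless vertices})$ canonical, so that for each component size $k$ the correspondence with $\mathbf{CUP}_{[k]}$ is a genuine bijection on isomorphism classes and no class is double-counted across different values of $k$. In particular I would verify the two degenerate endpoints carefully---that a single looped vertex is correctly assigned $|\mathbf{CUP}_{[1]}| = 1$, and that the edgeless graph is the unique class supplying the additive constant $1$---and check the base case $n = 2$ directly (where $T_2 = 3$) to confirm that the indexing of the geometric series is aligned with the previous corollary's formula for $|\mathbf{CUP}_{[n]}|$.
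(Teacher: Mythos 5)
Your proposal is correct and takes essentially the same route as the paper: both rest on the decomposition of a uniquely pressable graph into its unique non-trivial component plus loopless isolated vertices, giving the identity $T_n=\sum_{k=0}^{n}\left|\mathbf{CUP}_{[k]}\right|$ (your $1+\sum_{k=1}^{n}\left|\mathbf{CUP}_{[k]}\right|$). The only difference is cosmetic: you evaluate the sum in closed form as two parity-split geometric series, while the paper verifies the same closed form by induction via the recurrence $T_n=T_{n-1}+\left|\mathbf{CUP}_{[n]}\right|$.
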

\begin{proof}
There are three non-isomorphic uniquely pressable simple pseudo-graphs on $2$ vertices: the edgeless (loopless) graph, the disconnected graph containing one looped vertex and one unlooped vertex, and the connected graph containing one looped vertex and one unlooped vertex.  We proceed by induction on $n$.  Observe that for every $k\leq n$ and for every $H\in \mathbf{CUP}_k$,  we can create a (distinct) uniquely pressable graph $G$ on $n$ vertices by adding $n-k$ isolated vertices to $H$. Similarly, if $G$ is a uniquely pressable graph, then it is either the edgeless (loopless) graph or it contains exactly one non-trivial component which must be a $\mathbf{CUP}_k$ graph for some $k\leq n$.  Hence 
$$
T_n=\sum_{k=0}^{n} \left|\mathbf{CUP}_{[k]}\right|=T_{n-1}+\left|\mathbf{CUP}_{[n]}\right|.
$$
The result follows by observing that 
$$
\frac{5\cdot 3^{((n-1)-2)/2} +1}{2}+2\cdot 3^{(n-3)/2}=\frac{5\cdot 3^{(n-3)/2} +1+4\cdot 3^{(n-3)/2}}{2}=\frac{3^{(n+1)/2} +1}{2} 
$$
and 
$$
\frac{3^{((n-1)+1)/2} +1}{2}+3^{(n-2)/2}=\frac{3\cdot 3^{(n-2)/2} +1+2\cdot 3^{(n-2)/2}}{2}=\frac{5\cdot 3^{(n-2)/2} +1}{2}
$$
\end{proof} 
\section{Conclusion}

We end with a few open problems raised by the above analysis.\\

Our recognition algorithm has $\mathcal{O}(n^3)$ running time as it relies on explicit Gaussian elimination.  In \cite{andren2007complexity} and \cite{bertolazzi2014fast} the authors give improved algorithms (with sub-cubic running time) for obtaining the reduced row echelon form of a matrix with entries from a finite field.
\begin{question} Can the detection algorithm be improved to sub-cubic running time? 
\end{question}
It is natural to go beyond unique pressability for the problems of characterization, enumeration, and recognition.
\begin{question}
Which graphs have exactly $k$ pressing sequences for $k\geq 2$?
\end{question}
In \cite{cooper2016successful}, the authors consider the set of all pressing sequences of an arbitrary OSP-graph.  Some of their results suggest that problem of counting the number of pressing sequences might be easy; for example, they show that it is a kind of relaxation of graph automorphism counting, a problem which is GI-complete \cite{mathon1979note}, and therefore at worst quasipolynomial in time complexity \cite{babai2016graph}. They also connected the enumeration of pressing sequences with counting perfect matchings, a problem which is famously \#P-hard \cite{valiant1979complexity}.  Some of our results also suggest that this counting problem might be hard; for example, pressing sequences are certain linear extensions of the directed acyclic graphs whose adjacency matrices are given by instructional Cholesky roots, and enumerating linear extensions is also famously \#P-hard \cite{brightwell1991counting}.  Therefore we ask the following.
\begin{question}
What is the complexity of counting the number of pressing sequences of a graph?
\end{question}
\bibliographystyle{abbrv}
\bibliography{bibliography.bib}
\end{document}